\newtheorem{theorem}{Theorem}[section]
\newtheorem*{theorem*}{Theorem}
\newtheorem{lemma}[theorem]{Lemma}
\newtheorem{observation}[theorem]{Observation}
\newtheorem{corollary}[theorem]{Corollary}
\newtheorem{claim}[theorem]{Claim}
\newtheorem{conjecture}[theorem]{Conjecture}
\theoremstyle{definition}
\newtheorem{definition}[theorem]{Definition}
\theoremstyle{remark}
\newtheorem{remark}[theorem]{Remark}
\newtheorem{example}[theorem]{Example}
\newcommand{\cp}{\mathcal{P}}
\newcommand{\cs}{\mathcal{S}}
\newcommand{\ca}{\mathcal{A}}
\newcommand{\cb}{\mathcal{B}}
\newcommand{\D}{\mathcal{D}}
\newcommand{\A}{\mathcal{A}}
\newcommand{\Q}{\mathcal{Q}}
\newcommand{\B}{\mathcal{B}}
\newcommand{\tp}{\tilde{\mathcal{P}}}
\title{Badges and rainbow matchings}
\author{Ron Aharoni}
\address{Department of Mathematics, Technion -- Israel Institute of Technology, Technion City, Haifa, Israel and Moscow Institute of Physics and Technology, Dolgoprudny, Russia.}
\email{ra@tx.technion.ac.il}
\author{Joseph Briggs}
\address{Department of Mathematics,
Technion -- Israel Institute of Technology, Haifa, Israel.}
\email{briggs@campus.technion.ac.il}
\author{Jinha Kim}
\address{Discrete Mathematics Group, Institute for Basic Science (IBS), Daejeon, South Korea.} \email{jinhakim@ibs.re.kr}
\thanks{Jinha Kim(\texttt{jinhakim@ibs.re.kr}) is the corresponding author.}
\author{Minki Kim}
\address{Discrete Mathematics Group, Institute for Basic Science (IBS), Daejeon, South Korea.} \email{minkikim@ibs.re.kr}
\date{\today}
\begin{document}

\begin{abstract}
Drisko proved that $2n-1$ matchings of size $n$ in a bipartite graph have a rainbow matching of size $n$. For general graphs it is conjectured that $2n$ matchings suffice for this purpose (and that $2n-1$ matchings suffice when $n$ is even). The known graphs showing sharpness of this conjecture 
for $n$ even are called {\em badges}.
We improve the previously best known bound from $3n-2$ to $3n-3$, using a new line of proof that involves analysis of the appearance of badges. 
We also prove a ``cooperative'' generalization: for $t>0$ and $n \geq 3$, any
$3n-4+t$ sets of edges, the union of every $t$ of which contains a matching of size $n$, have a rainbow matching of size $n$. 
\end{abstract}

\maketitle

\section{Introduction}
Given a collection of sets, $\cs=(S_1, \ldots,S_m)$, an $\cs$-{\em rainbow set} is 
the image of a partial choice function of $\cs$. So, it is a set $\{x_{i_j}\}$, where $1 \le i_1< \ldots <i_k\le m$ and $x_{i_j}\in S_{i_j} ~(j \le k)$.  We call the sets $S_i$ {\em colors} and we say that $x_{i_j}$ is {\em colored by $S_{i_j}$}  and that $x_{i_j}$  {\em represents $S_{i_j}$} in the rainbow set.

Given  numbers $m,n,k$, we write $(m,n)\to k$ if every $m$ matchings of size $n$ in any graph  have a rainbow matching of size $k$, and $(m,n)\to_{\B} k$  if the same is true in every bipartite graph. 
 Generalizing a result of Drisko \cite{drisko}, 
  the first author and E. Berger proved \cite{ab}:
  
  \begin{theorem}\label{drisko}
  $(2n-1,n)\to_\B n$. 
  \end{theorem}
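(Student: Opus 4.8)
The plan is to argue by contradiction via a maximum rainbow matching, reducing the statement to the existence of a suitable ``rainbow'' alternating path and then producing that path by a Hungarian-tree argument, with bipartiteness keeping the alternating structure under control. Suppose $F_1,\dots,F_{2n-1}$ are matchings of size $n$ in a bipartite graph $G=(X\cup Y,E)$ with no rainbow matching of size $n$, and fix a rainbow matching $R$ of maximum size $r\le n-1$; after reindexing, $R=\{e_1,\dots,e_r\}$ with $e_i\in F_i$, so the colours $r+1,\dots,2n-1$ — at least $n$ of them, hence strictly more than $r$ — are \emph{free}. Call a path $P=f_0,e_{i_1},f_1,e_{i_2},\dots,e_{i_\ell},f_\ell$ a \emph{rainbow $R$-augmenting path} if the edges $f_0,\dots,f_\ell$ lie in $\ell+1$ distinct free colours, the edges $e_{i_1},\dots,e_{i_\ell}$ lie in $R$, and both endpoints of $P$ are $R$-unsaturated. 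The first, easy step is the observation that no such $P$ can exist: $R\,\triangle\,P$ would be a matching of size $r+1$ that is still rainbow (we deleted the $\ell$ colours $i_1,\dots,i_\ell$ of $R$ and added $\ell+1$ new free colours), contradicting maximality of $R$. So everything reduces to showing that a maximum rainbow matching with $r<n$ \emph{does} admit a rainbow augmenting path.

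To produce that path I would grow an alternating tree. Since $|F_j|=n>r$ for each free $j$, at least $n-r\ge 1$ edges of $F_j$ have an endpoint outside $V(R)$, and — by maximality of $R$ — the other endpoint of each such edge lies in $V(R)$; these supply $R$-unsaturated starting vertices. From an unsaturated vertex one follows a free edge into $V(R)$, then its $R$-edge, then a free edge of a colour not yet used along the current branch, and so on, alternating ``free'' and ``$R$'' edges. The count is engineered so the growth cannot stall: any alternating path uses at most $r$ edges of $R$ and hence at most $r+1$ free edges, while there are at least $r+1$ free colours, so a branch never exhausts the available colours before it is forced to meet a second unsaturated vertex, giving a rainbow augmenting path. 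Bipartiteness is what keeps the tree well-behaved: every component of $R\cup F_j$ is a path or an even cycle, there are no odd-cycle obstructions, and the two ends of an augmenting path lie on opposite sides of $G$ — this is precisely the slack that lets $2n-1$, rather than $2n$, matchings suffice.

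The hard part will be this tree-growth step: one must show that if the alternating tree cannot be extended to a second unsaturated vertex, a K\"onig/Hall-type deficiency appears that contradicts $|F_j|=n$ for the free colours, and this has to be carried out while respecting the global colour constraint — a whole matching $F_j$ is one colour, the same colour may be demanded on different branches, and the ``distinct colours along a branch'' requirement interacts with branch length. Making this bookkeeping precise, so that exactly $2n-1$ matchings force the augmentation, is the delicate point. A different route, which sidesteps it, is topological: apply the Aharoni--Haxell rainbow-SDR (``topological Hall'') theorem to the matching complex of $G$ with the classes $F_i$, where the crux becomes instead verifying the required connectivity bounds for the matching complexes of the unions $\bigcup_{i\in I}F_i$.
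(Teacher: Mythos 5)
Your opening reduction is exactly the paper's own deduction of Theorem~\ref{drisko}: fix a maximum rainbow matching $F$ of size $k<n$, observe that more than $k$ colours are unrepresented and that each unrepresented matching, having size $n>k$, yields an $F$-AAP, and conclude once a \emph{rainbow} $F$-AAP is produced, since the symmetric difference is a larger rainbow matching. The paper obtains that rainbow path by invoking Theorem~\ref{thm: akz} (from the cited work of Aharoni--Kotlar--Ziv): any $k+1$ $F$-AAPs in a bipartite graph, $|F|=k$, admit a rainbow $F$-AAP. So the entire content of the theorem is concentrated in that lemma, and this is precisely where your proposal has a genuine gap.

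Your alternating-tree step does not prove it. The count you give --- an augmenting path uses at most $r+1$ free edges while at least $r+1$ free colours are available --- is merely the \emph{hypothesis} of the lemma, not an argument: having enough colours globally does not prevent every branch of the tree from stalling at $R$-saturated vertices all of whose incident edges of still-unused free colours lead back into vertices already reached, and resolving exactly this stalling scenario is the whole difficulty (note that $2n-2$ matchings do \emph{not} suffice, so no argument that only counts colours against path length can work). You yourself flag this bookkeeping as ``the delicate point,'' i.e.\ the key step is left unproven; the promised K\"onig/Hall-type deficiency contradiction in the coloured setting is never exhibited, and it is not routine, because the same colour class may be needed on different branches and a branch may not reuse a colour. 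The known combinatorial proofs of the rainbow-AAP lemma proceed by induction on $k$, deleting or contracting along first edges of the paths --- in spirit a simpler, bipartite analogue of the proof of Theorem~\ref{thm:rb.a.a.path} in this paper --- and nothing in your sketch supplies such an induction. The alternative you mention (topological Hall applied to matching complexes) has the same status: verifying the required connectivity bounds \emph{is} the content of the topological proofs, not a verification you have carried out. In short, you have correctly reduced the theorem to the right lemma, but the lemma itself remains unproved in your write-up.
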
  
  
  In \cite{abchs} it was conjectured that 
  almost the same is true in all graphs: 
     
  \begin{conjecture}\label{2nconj}
  $(2n, n) \to n$. If $n$ is odd then $(2n-1, n) \to n$.
  \end{conjecture}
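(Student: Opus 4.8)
The plan is to argue by contradiction from a maximum rainbow matching, reducing as much as possible to the bipartite case of Theorem~\ref{drisko}. Suppose $\cs = (M_1, \dots, M_m)$ are matchings of size $n$ in a graph $G$, with $m = 2n$ (or $m = 2n-1$ when $n$ is odd), and that no $\cs$-rainbow matching has size $n$; let $R$ be a maximum $\cs$-rainbow matching, put $r := |R| \le n-1$, and among all maximum rainbow matchings choose $R$ so that its set of used colors is extremal in the lexicographic order on $[m]$ (the extremal hypothesis in Drisko's proof). Then at least $m - r \ge n+1$ colors are unused ($\ge n$ in the odd case). For each unused color $M_j$ the symmetric difference $M_j \triangle R$ is a disjoint union of $M_j/R$-alternating paths and cycles, and since $|M_j| = n > r$ at least one component is an $R$-augmenting path; more usefully, the union of all alternating walks emanating from $R$-unsaturated vertices carries the obstruction. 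In a bipartite graph every such component is a path or an even cycle, and Drisko's rotation lemma shows the extremal choice of $R$ forbids an augmenting component as soon as $m \ge 2n-1$ — this is exactly Theorem~\ref{drisko}. The new phenomenon in general graphs is that this web of alternating walks may contain \emph{odd} cycles, which give no augmentation; pinning down these odd structures is precisely what exhibits a badge, so the combinatorial content of the conjecture is that $2n$ colors are enough to ``pay for'' whatever badge the obstruction forces.

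First I would make this dichotomy precise by running the alternating-path/rotation analysis underlying the $3n-2$ and $3n-3$ proofs (which may be assumed) and extracting from a putative obstruction either (i) a sub-configuration whose relevant alternating structure is entirely bipartite, where Theorem~\ref{drisko} applies and already yields a contradiction with the $2n-1 \le m$ colors at hand; or (ii) an explicit badge $B \subseteq G$, together with the list of colors that ``live on'' $B$. In case (ii) the goal is an induction on $n$: a badge realizing the sharpness of $(2n,n)\to n$ for even $n$ uses $2n-1$ matchings, so with $2n$ given we have \emph{one} color to spare; deleting the odd part of $B$ should leave matchings of some size $n' < n$ in the remaining graph, to which the inductive hypothesis for $n'$ applies — and the spare color is what one would hope to leverage so that the residual color count still beats the threshold for $n'$. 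The refinement for odd $n$ would then be immediate, since badges require $n$ to be even: case (ii) cannot occur, so case (i) closes the argument at $2n-1$ colors.

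The step I expect to be the main obstacle — and the reason this conjecture is still open while the known bounds sit near $3n$ — is the bookkeeping in case (ii). The matchings $M_i$ do not share a vertex set, so there is no canonical region of $G$ to delete: removing the vertices of a badge that is ``good'' for some of the $M_i$ destroys the size-$n$ property for others, and the alternating-path analysis then loses a constant \emph{fraction} of the colors rather than an additive constant, which is exactly why iterating it gives only $\approx 3n$. Making case (ii) genuinely work would, I believe, require a new ingredient: either a quantitative (defect) version of the topological connectivity estimate behind the Aharoni--Berger proof of Theorem~\ref{drisko}, valid for the matching complex of a non-bipartite graph, which lacks the combinatorial regularity exploited in the bipartite case; or a global charging scheme that injectively pairs each odd alternating cycle in the obstruction with an otherwise-unused color. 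I would regard supplying one of these as the crux; the remainder of the plan above is an adaptation of existing technique.
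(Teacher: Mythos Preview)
The statement you are attempting to prove is Conjecture~\ref{2nconj}, which the paper explicitly leaves open; there is no proof of it in the paper to compare against. The paper's contribution is the bound $(3n-3,n)\to n$ (Theorem~\ref{thm: main}), and the authors note that the alternating-path machinery underlying Theorems~\ref{t.abchs} and~\ref{thm:rb.a.a.path} is sharp at $2k$ AAPs, so ``the strategy that works in the bipartite case cannot bring us close to Conjecture~\ref{2nconj}.''

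Your proposal is, as you yourself say, not a proof but a plan with an identified gap in case~(ii). Two concrete points deserve correction. First, your assertion that ``badges require $n$ to be even'' is false: a $k$-badge exists for every $k\ge 1$, and Theorem~\ref{thm:rb.a.a.path} shows that any family of $2k$ $F$-AAPs with $|F|=k$ and no rainbow AAP is a $k$-badge, irrespective of the parity of $k$. What is true is only that the specific sharpness construction in Observation~\ref{sharp} (a single $(n-1)$-origamistrip plus one extra matching) fails to witness tightness when $n$ is odd. So your claimed disposal of the odd-$n$ case --- ``case~(ii) cannot occur, so case~(i) closes the argument at $2n-1$ colors'' --- does not go through: the obstruction structure can perfectly well be a badge when $n$ is odd. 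Second, the dichotomy you set up in case~(i) is not really available: the paper's structural result (Theorem~\ref{thm:rb.a.a.path}) says the obstruction is \emph{always} a badge once you have $2k$ unused colors, so there is no separate ``entirely bipartite'' sub-case to which Theorem~\ref{drisko} applies directly.

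Your diagnosis of why the inductive bookkeeping in case~(ii) loses a constant fraction of the colors is accurate and matches the paper's own discussion of why $3n$ is the natural barrier for this method. But absent the ``new ingredient'' you call for, what you have written is a restatement of the difficulty, not a proof.
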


If true, this is reminiscent of the relationship between K\"onig's theorem, stating that $\chi_e \le \Delta$ in bipartite graphs, and Vizing's theorem,  $\chi_e \le \Delta+1$ in general graphs, where $\chi_e$ is the edge chromatic number, namely the minimal number of matchings covering the edge set of the graph. The conjecture says that there is a price of just $1$ for passing from bipartite graphs  to general graphs.   

The conjecture is implicit already in  \cite{barat} - the examples showing sharpness appear there.  Another conjecture of the first author and E. Berger, stated  in the bipartite case in \cite{ab}, but presently no counterexample is known also in general graphs, is:
     \begin{conjecture}\label{ab}
     $(n,n)\to n-1$.
          \end{conjecture}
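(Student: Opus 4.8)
The plan is to attack Conjecture~\ref{ab} by the topological method for rainbow sets, in its \emph{deficiency} form. Let $M_1,\dots,M_n$ be the given matchings of size $n$ in $G$, and regard each $M_i$ as a maximal face of the matching complex $\M(G)$, whose vertices are the edges of $G$ and whose faces are the matchings. By the deficiency version of the topological Hall theorem it suffices, for a rainbow matching omitting at most one color --- that is, of size $n-1$ --- to verify that
\[
\eta\!\left(\M\!\left(\bigcup_{i\in J}M_i\right)\right)\ \ge\ |J|-1\qquad\text{for every }J\subseteq[n],
\]
where $\eta$ is the homotopical connectivity parameter (a nonempty complex has $\eta\ge1$, a connected one $\eta\ge2$, and so on). So the conjecture reduces to one clean statement, to be applied with $k=|J|\le n$: \emph{the union of $k$ matchings each of size at least $k$ has $\eta(\M(\,\cdot\,))\ge k-1$}. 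Because $\bigcup_{i\in J}M_i$ contains every $M_i$, its matching number is at least $n\ge k$, so the known general lower bound $\eta(\M(H))\ge\nu(H)/2$ already yields $\eta\ge k/2$; the entire content is to push the constant from $\tfrac12$ up to $1$, and that can only be bought with the hypothesis that there are \emph{few} matchings relative to their common size.

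To prove the reduced statement I would add the edges of the $M_i$ one at a time and track $\eta$ through the standard pushout: if $H'=H\cup\{f\}$ then $\M(H')$ is obtained from $\M(H)$ by coning off the subcomplex $\M(H\setminus N[f])$, so
\[
\eta(\M(H'))\ \ge\ \min\!\big\{\,\eta(\M(H)),\ \eta(\M(H\setminus N[f]))+1\,\big\}.
\]
Starting from the base case in which all $k$ matchings coincide (the union is a single matching, $\M$ a simplex, $\eta=\infty$), the difficulty is immediately visible: after deleting $N[f]$ the surviving matchings have size only $\ge k-2$, so a naive induction delivers just $\eta\ge k-2$, losing the crucial last unit. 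Recovering it is exactly the kind of ``off by one'' that forces the $2n-1$ in Theorem~\ref{drisko} rather than $2n-2$, and the extra $+n$ in the general bound; one would need to choose the edge $f$ cleverly (so that $H\setminus N[f]$ inherits not $k-2$ but almost $k$ usable matchings, exploiting that the matchings overlap little), or to run the recursion on a smarter invariant than the plain connectivity.

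As a fallback I would argue combinatorially, in the spirit of Drisko's original proof. Take a maximum rainbow matching $R$ of size $r\le n-2$ with color set $C$, and pick two unused colors $a,b\notin C$. Maximality forces every edge of $M_a$ and of $M_b$ to meet $V(R)$, and a short count then gives $r\ge n/2$ and, more usefully, that $M_a$ has at least $2(n-r)\ge 4$ ``dangling'' edges with exactly one endpoint outside $V(R)$; these supply starting vertices for rainbow-alternating paths. One searches for an augmenting path ending at a free vertex (through a non-$R$ edge, or through one of the colors $a,b$); absent such a path, the alternating structure must close up into the known extremal configuration --- a $C_{2n}$ in the bipartite case, a badge in general --- which one then shows cannot be realised by only $n$ matchings. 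The delicate point is that these alternating paths may use only the $n$ available colors, almost all already committed to $R$, so there is very little slack.

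The step I expect to be the genuine obstacle is precisely the last-unit gap above: equivalently, proving that the only obstructions to an $(n-1)$-sized rainbow matching are the global cyclic structures (Drisko cycles, badges), and that these are too ``color-expensive'' to fit into $n$ colors. This is the same phenomenon responsible for the ``$2$'' in Theorem~\ref{drisko} and the ``$3$'' in the general bound (now $3n-3$); isolating the sharper deficiency-one behaviour from only $n$ matchings seems to require a genuinely new idea about how large matchings interact when there are few of them --- which is consistent with the conjecture being open even for bipartite graphs, where Theorem~\ref{drisko} alone falls well short of $(n,n)\to_\B n-1$.
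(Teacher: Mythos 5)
The statement you are addressing is not a theorem of the paper but Conjecture~\ref{ab}, which the paper explicitly leaves open (and notes is open even for bipartite graphs); a correct proof would, as the paper points out, imply the Ryser--Brualdi--Stein conjecture on Latin squares. Your text is accordingly not a proof but a programme, and you yourself flag the missing step. Concretely: the reduction via the deficiency form of the topological Hall theorem is legitimate --- a rainbow matching of size $n-1$ would follow from $\eta\bigl(\M\bigl(\bigcup_{i\in J}M_i\bigr)\bigr)\ge |J|-1$ for all $J\subseteq[n]$ --- but the entire difficulty is then concentrated in that connectivity bound, and nothing you propose establishes it. The general bound $\eta\ge\nu/2$ gives only $\eta\ge k/2$, and your edge-by-edge induction through the cone/pushout inequality provably cannot close the gap as stated: deleting $N[f]$ can destroy an edge of each of the $k$ matchings, so the induction hypothesis only returns $\eta\ge k-2$ at each step, and iterating never recovers the lost unit. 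The ``clever choice of $f$'' or ``smarter invariant'' you invoke is exactly the new idea that is absent; without it the argument terminates at a bound of the same strength as what is already known, which falls short of $(n,n)\to n-1$ even in the bipartite case.

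Your combinatorial fallback has the same status. Starting from a maximum rainbow matching $R$ of size $r\le n-2$ and two unused colors, the alternating-path analysis in this paper (Theorems~\ref{thm: akz}, \ref{t.abchs}, \ref{thm:rb.a.a.path}) needs roughly $2r$ uncommitted colors to guarantee a rainbow augmenting path, whereas you have only about $n-r$ available; the badge characterization of Theorem~\ref{thm:rb.a.a.path} tells you what the obstructions look like when there are $2r$ alternating paths with no rainbow one, but it says nothing when the number of available colors is far below $2r$, which is the regime of Conjecture~\ref{ab}. Showing that badges (or any other obstruction) are ``too color-expensive to fit into $n$ colors'' is precisely the open content of the conjecture, not a step you can currently carry out. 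So the proposal contains a genuine and explicitly acknowledged gap, and there is no proof in the paper to compare it against.
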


     If true, this conjecture implies a famous conjecture of Ryser--Brualdi--Stein \cite{br}. A {\em partial transversal} in a Latin square is a set of distinct entries, lying in different rows and different columns. 
     \begin{conjecture}\cite{br}
          Every  $n \times n$ Latin square contains a Latin transversal of size $n-1$.
    \end{conjecture}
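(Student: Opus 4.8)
The statement as displayed is of course open; what one can actually prove — and what the surrounding text is pointing at — is that it follows from Conjecture \ref{ab}. So the plan is to prove the implication ``$(n,n)\to n-1$ $\Rightarrow$ Ryser--Brualdi--Stein'' via the standard encoding of a Latin square as a family of perfect matchings. Given an $n\times n$ Latin square $L$, I would form the complete bipartite graph $G$ with parts $R=\{\rho_1,\dots,\rho_n\}$ (the rows) and $C=\{\gamma_1,\dots,\gamma_n\}$ (the columns), identifying the edge $\rho_i\gamma_j$ with the cell $(i,j)$. For each symbol $k\in\{1,\dots,n\}$ set $M_k=\{\rho_i\gamma_j : L(i,j)=k\}$. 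Since $L$ is Latin, symbol $k$ occurs exactly once in each row and exactly once in each column, so each $M_k$ is a perfect matching of $G$; thus $M_1,\dots,M_n$ are $n$ matchings of size $n$ in $G$, and, a cell carrying only one symbol, they are pairwise disjoint.

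Next I would apply Conjecture \ref{ab} to the family $\cs=(M_1,\dots,M_n)$ (only the bipartite instance is needed): it yields an $\cs$-rainbow matching $N$ with $|N|=n-1$. Unwinding the definitions, $N$ is a set of $n-1$ cells; being a matching of the bipartite graph $G$, these cells lie in pairwise distinct rows and pairwise distinct columns, and being rainbow while each cell lies in exactly one class $M_k$, the $n-1$ cells carry $n-1$ distinct symbols. Hence $N$ is exactly a partial (Latin) transversal of size $n-1$, which is the assertion of the Ryser--Brualdi--Stein conjecture.

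There is essentially no obstacle beyond bookkeeping: all the content sits inside Conjecture \ref{ab}, and the argument above is just the dictionary \emph{rows / columns / symbols} $\leftrightarrow$ \emph{the two vertex classes / the colors}, together with the disjointness of the $M_k$'s (which is what guarantees that a rainbow set of size $n-1$ genuinely consists of $n-1$ distinct cells with $n-1$ distinct symbols). It is worth remarking why Theorem \ref{drisko} does not already suffice: Drisko's theorem would need $2n-1$ matchings to force a rainbow matching of size $n$, whereas here only $n$ matchings are available — and for even $n$ a transversal of full size $n$ need not exist at all (e.g.\ the addition table of $\mathbb{Z}_n$), so $n-1$ is the right target.
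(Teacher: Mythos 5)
You have read the situation correctly: the displayed statement is the Ryser--Brualdi--Stein conjecture, which is open, and the paper offers no proof of it --- it only asserts in passing that Conjecture \ref{ab} would imply it. Your reduction is exactly the standard dictionary the authors have in mind: encode the Latin square as the complete bipartite graph on rows and columns, let the symbol classes $M_1,\dots,M_n$ be the $n$ perfect matchings, apply $(n,n)\to n-1$ to get a rainbow matching of size $n-1$, and observe that a matching gives distinct rows and columns while rainbowness (each cell carrying exactly one symbol) gives distinct symbols, i.e.\ a partial transversal of size $n-1$. This is correct and complete as a proof of the implication; the only thing to keep clear in the write-up is that you are proving ``Conjecture \ref{ab} implies the statement,'' not the statement itself, since the latter remains open. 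Your closing remark on why Theorem \ref{drisko} is insufficient (only $n$ matchings available, and full-size transversals can fail for even $n$, e.g.\ the addition table of $\mathbb{Z}_n$) is also accurate and explains why $n-1$ is the right target.
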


     It also implies the first part, $(2n,n)\to n$, of Conjecture \ref{2nconj}. To see this, assume that  $M_1, \ldots ,M_{2n}$ are matchings of size $n$ in any graph. Let $V_1, V_2$ be two disjoint copies of the vertex set  of the graph, and for $j=1,2$~and ~$i \le 2n$  let $M^j_i$ be a copy  of $M_i$ on $V_j$. Let $N_i=M^1_i \cup M^2_i~~(i\le 2n)$. If Conjecture \ref{ab} is true, then the system $(N_1, \ldots ,N_{2n})$  has a rainbow matching $N$ of size $2n-1$. By the pigeonhole principle, $n$ of the edges of $N$ belong to the matchings $M_i^j$ for the same $j$, proving Conjecture \ref{2nconj}. If indeed this is the reason (in some non-rigorous sense) for the validity of Conjecture \ref{2nconj}, this may explain the difficulty of the latter, since as mentioned above Conjecture \ref{ab} belongs to a family of notoriously hard problems.

     The previously best result on Conjecture \ref{2nconj} is:
\begin{theorem}\label{3nminus2}
     \cite{abchs}  $(3n-2, n)\to n$.
     \end{theorem}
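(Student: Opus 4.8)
The plan is to argue by contradiction and to squeeze an upper bound on the number of colours out of the non-augmentability of a maximum rainbow matching. Suppose $M_1,\dots,M_{3n-2}$ are matchings of size $n$ with no rainbow matching of size $n$, and let $R$ be a rainbow matching of maximum size $k$, chosen extremally (for instance so that the set $C\subseteq[3n-2]$ of colours it represents is lexicographically least); by hypothesis $k\le n-1$. Let $\mathcal W$ be the set of $|\mathcal W|=3n-2-k$ colours not in $C$. Two routine observations get things going: no edge of any $M\in\mathcal W$ is disjoint from $V(R)$, since it could be added to $R$; and, counting the $2n$ endpoints of the edges of such an $M$ against the $2k$ vertices of $V(R)$ (each used by at most one edge of $M$), we get $2k\ge n$, so $k\ge\lceil n/2\rceil$, and each $M\in\mathcal W$ has at least $2n-2k$ edges with exactly one endpoint in $V(R)$.

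Next I would set up an exchange structure. If $M\in\mathcal W$ has an edge $f=xy$ with $x$ lying on the $R$-edge $r_a=xx'$ and $y\notin V(R)$, then replacing $r_a$ by $f$ produces another maximum rainbow matching in which the vertex $x'$ and the colour of $r_a$ have been released; iterating such moves — each using an edge of a currently unused colour — defines \emph{augmenting cascades}, and maximality of $|R|$ says no cascade ends by matching a released vertex to a free vertex. Encoding the moves as colour-labelled arcs of an auxiliary digraph on $V(R)$ together with an ``outside'' terminal, non-augmentability becomes an unreachability / min-cut statement in the spirit of Berge's augmenting-path lemma. The key point is that each individual exchange move is governed by the union of just two matchings — the current rainbow matching and one free colour — which carries no odd cycle; consequently the portion of the cascade structure that never closes up through an odd alternating cycle is, in effect, a bipartite rainbow-matching instance with colour classes of size $\ge k+1$, and Theorem~\ref{drisko} applied to it bounds by $2k$ the number of free colours that this bipartite part can absorb without producing an augmentation of $R$.

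The remaining free colours must then be trapped by odd alternating cycles. Here I would show that a minimal such trap contains a badge-type configuration, and set up a charging scheme in which each independent odd trap accounts for at most one free colour beyond the Drisko count, while the number of independent traps is at most the slack $n-1-k$. Assembling the two estimates yields $|\mathcal W|\le 2k+(n-1-k)=k+n-1\le 2n-2$, which contradicts $|\mathcal W|=3n-2-k\ge 2n-1$; this contradiction proves $(3n-2,n)\to n$.

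\textbf{The main obstacle} is the third step. One has to isolate the bipartite part of the cascade structure precisely enough that Drisko's theorem genuinely applies and yields the clean constant $2k$ — in particular guaranteeing that the relevant bipartite colour classes really have size at least $k+1$ — and then make the charging scheme airtight: a single free colour is a matching of size $n$ that may meet $V(R)$ along several components at once, so one must show it is never charged simultaneously to the bipartite part and to an odd trap, nor to two different traps, and that every odd trap really does contain a badge. Controlling the number of independent odd traps by $n-1-k$ throughout the regime $\lceil n/2\rceil\le k<n-1$ is the quantitative heart of the argument, and is exactly the place where a more careful analysis of how badges can appear (the ingredient behind the paper's improvement to $3n-3$) would have to enter.
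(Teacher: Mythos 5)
There is a genuine gap, and it sits exactly where you yourself place ``the main obstacle''. Your final count $|\mathcal{W}|\le 2k+(n-1-k)$ rests on two unproved quantitative claims: (a) that the ``bipartite part'' of your cascade structure can absorb at most $2k$ free colours, by Theorem~\ref{drisko}, and (b) that each ``independent odd trap'' accounts for at most one further colour and that there are at most $n-1-k$ such traps. Neither is established, and (b) is false as stated: the origamistrips/badges of this paper are precisely single odd traps that absorb many colours. An $m$-origamistrip whose skeleton lies in the current rainbow matching carries $2m$ $F$-AAPs with no rainbow $F$-AAP (Observation~\ref{noraap}), so one trap can absorb up to $2k$ free colours; in the construction behind Observation~\ref{sharp} one has $k=n-1$, so your ``slack'' $n-1-k$ is $0$, and yet $2n-2$ free colours are trapped in a single odd structure. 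Your calibration of Drisko is also off: in the alternating-path form the bipartite statement (Theorem~\ref{thm: akz}) says $k+1$ $F$-AAPs force a rainbow one, i.e.\ a genuinely bipartite part absorbs at most $k$ colours, not $2k$; the (almost) factor-two loss in general graphs is exactly the contribution of the odd structures you are trying to charge separately. So the proposed decomposition ``bipartite $\le 2k$ plus traps $\le n-1-k$'' cannot be made airtight; the fact your argument actually needs is the single statement that any $2k+1$ $F$-AAPs in an arbitrary graph admit a rainbow $F$-AAP (Theorem~\ref{t.abchs}), and your sketch neither invokes it nor reproves it.

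For comparison, once that lemma is available the deduction is immediate, and this is the route taken here and in \cite{abchs}: let $F$ be a maximum rainbow matching, $|F|=k\le n-1$; each of the $3n-2-k\ge 2n-1\ge 2k+1$ unrepresented matchings has size $n>k$, so its union with $F$ contains an $F$-AAP; Theorem~\ref{t.abchs} yields a rainbow $F$-AAP, and the symmetric difference with $F$ gives a larger rainbow matching, a contradiction. (Equivalently, apply Theorem~\ref{cor: cooperating any matching} with $t=0$.) The real content is thus the $2k+1$ bound itself, proved in \cite{abchs} via Edmonds--Gallai/blossom arguments and in this paper by the inductive badge analysis of Theorem~\ref{thm:rb.a.a.path}; your cascade-and-charging outline would in effect have to reprove it, and as written it does not.
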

     
In \cite{lee}  an alternative, topological proof was given for this result.
Theorem \ref{drisko}  has been given a few proofs. Three quite distinct topological proofs were given in  \cite{ahj, abkz, lee}. There is also more than one combinatorial proof, but all known proofs are similar in spirit. All use alternating paths. 

\begin{definition}
Given a matching $F$ in a graph, a simple path is called $F$-{\em alternating} if every other edge in it belongs to $F$. It is called {\em augmenting} if it starts and ends at a vertex not belonging to any edge in $F$.
 Similarly, a  cycle of odd length is called {\em  $F$-alternating} if every other edge in it belongs to $F$, apart from two adjacent edges, that do  not belong to $F$.
\end{definition}

The origin of the name ``augmenting'' is that taking the symmetric difference of $F$ and the edge-set of the path yields a matching larger than $F$. 

The combinatorial proofs of Theorem \ref{drisko} are based on  the following:     
     \begin{theorem}\cite{akz}\label{thm: akz}
     If $F$ is a matching of size $k$ in a bipartite graph and $\A=(A_1, \ldots ,A_{k+1})$ is a family of augmenting $F$-alternating paths then there exists an $\A$-rainbow augmenting $F$-alternating path. 
     \end{theorem}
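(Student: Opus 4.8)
The plan is to recast the statement as a purely directed-graph assertion about rainbow dipaths, and then prove that by induction on $k$. Write $X,Y$ for the two sides of the bipartite graph. Orient each non-$F$ edge from its $X$-endpoint to its $Y$-endpoint, orient each edge of $F$ from its $Y$-endpoint to its $X$-endpoint, and then contract every (oriented) edge of $F$; call the resulting digraph $D$. Its vertices are the $F$-unmatched vertices of $G$ together with one vertex $v_f$ for each $f\in F$: an unmatched vertex of $X$ has in-degree $0$ in $D$ (a \emph{source}), an unmatched vertex of $Y$ has out-degree $0$ (a \emph{sink}), and the $k$ vertices $v_f$ are the \emph{internal} ones. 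One checks directly that the augmenting $F$-alternating paths of $G$ are in bijection with the directed source-to-sink paths of $D$, with the non-$F$ edges of a path corresponding to the arcs of its dipath, and a non-$F$ edge of $G$ lying in $A_i$ exactly when the corresponding arc of $D$ lies in the dipath $A_i$; moreover a source-to-sink dipath passes through at most $k$ internal vertices and so has at most $k+1$ arcs. Hence it is enough to prove: \emph{in any digraph whose vertex set is partitioned into sources, sinks, and a set of $k$ internal vertices, with no arc into a source and no arc out of a sink, if $A_1,\dots,A_{k+1}$ are directed source-to-sink paths, then there is a directed source-to-sink path $P$ and an injection $c$ from the arc set of $P$ into $\{1,\dots,k+1\}$ with $e\in A_{c(e)}$ for every arc $e$ of $P$.}

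I would prove this claim by induction on $k$. If $k=0$, every source-to-sink dipath is a single arc, and $A_1$ itself works. For the inductive step, if some $A_i$ is a single arc we are done as before, so assume every $A_i$ has at least two arcs; then the last arc of $A_{k+1}$ has the form $v\to s$ with $s$ a sink, and $v$ is forced to be internal because it has both an incoming and an outgoing arc. Pass to the digraph $D'$ obtained from $D$ by deleting every arc leaving $v$ and reclassifying $v$ as a sink: then $D'$ has $k-1$ internal vertices and still satisfies the hypotheses. For $i\le k$, let $\bar{A}_i$ be the initial segment of $A_i$ up to and including $v$ if $A_i$ passes through $v$, and $\bar{A}_i=A_i$ otherwise; each $\bar{A}_i$ is a directed source-to-sink path of $D'$ with $\bar{A}_i\subseteq A_i$. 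Applying the induction hypothesis to $D'$ and $\bar{A}_1,\dots,\bar{A}_k$ yields a source-to-sink dipath $Q$ of $D'$ with a legal injective coloring $c$ into $\{1,\dots,k\}$. If $Q$ ends at a sink of $D$, then $Q$ with that coloring already proves the claim for $D$ (each $\bar{A}_i\subseteq A_i$). Otherwise $Q$ ends at $v$, and then $P:=Q$ followed by the arc $v\to s$ is a simple directed path, since a sink of $D$ such as $s$ cannot occur as an interior vertex of the $D'$-dipath $Q$; extending the coloring by $c(v\to s):=k+1$ proves the claim for $D$. This closes the induction.

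The only real content lies in picking the right reduction, and the step $v\to s$ is the main obstacle to get right. Simply removing an edge of $F$ from the matching does not work: its $X$-endpoint then becomes unmatched, so the reduced instance acquires augmenting paths that start at that vertex and do not correspond to augmenting paths of the original problem. Splitting an internal vertex into a source--sink pair is no better, because every $A_i$ running through that vertex would split into two pieces, leaving more than $k$ paths on a digraph with only $k-1$ internal vertices (so the induction hypothesis is not directly applicable), and the two pieces of a single $A_i$ would then be forced to share one color. Truncating all paths at the \emph{last} internal vertex of $A_{k+1}$ resolves both issues at once: precisely one color, $A_{k+1}$, is retired, matching the decrease from $k$ to $k-1$ internal vertices; and the dipath $Q$ returned by induction either already lands at a sink of $D$ or lands at $v$, exactly one retired arc short of a sink. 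With the reduction fixed, the bijection between $G$ and $D$, and the verification that each $\bar{A}_i$ is a genuine source-to-sink dipath of $D'$, are routine.
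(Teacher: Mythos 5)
Your proof is correct. I checked the two places where it could go wrong and both hold: the translation is faithful (in a bipartite graph an augmenting $F$-alternating path has one unmatched endpoint on each side, so after orienting non-$F$ edges $X\to Y$, $F$-edges $Y\to X$, and contracting $F$, augmenting paths correspond exactly to directed source-to-sink paths, with non-$F$ edges as arcs), and the inductive step is sound: after deleting the out-arcs of $v$ and declaring $v$ a sink, each truncated $\bar{A}_i$ ($i\le k$) is a source-to-sink dipath of $D'$ contained in $A_i$, the induction hypothesis applies with $k-1$ internal vertices and $k$ paths, and in the case where $Q$ ends at $v$ the appended arc $v\to s$ keeps the path simple (no vertex of $Q$ other than its last can be the sink $s$) and receives the unused color $k+1$. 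Note that the paper does not prove Theorem~\ref{thm: akz} at all; it is quoted from \cite{akz}, so there is no in-paper proof to match. Your argument is in the same general spirit as the known combinatorial proofs (induction plus alternating paths), but the specific route -- recasting via the standard ``contract the matching'' digraph and inducting by truncating all paths at the last internal vertex of $A_{k+1}$, which becomes a sink -- is a clean exploitation of bipartiteness through the source/sink structure. It is instructive to contrast it with the paper's proof of the general-graph analogue (Theorem~\ref{thm:rb.a.a.path}), which cannot use this orientation trick and instead must locate a rainbow alternating triangle or odd cycle and contract it; your reduction is exactly the step that fails in non-bipartite graphs, which is why the bound there doubles to $2k$.
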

     
     For a given matching $F$, we write ``$F$-AAP''   for ``augmenting $F$-alternating path''. Given a family $\cp$ of sets of edges, an $F$-alternating path is called {\em $\cp$-rainbow} if its non-$F$ edges form a $\cp$-rainbow set. 

          To deduce Theorem \ref{drisko}, let $H$ be a family of $2n-1$ matchings of size $n$. Let $F$ be a maximal size rainbow matching, and assume for contradiction that $k:=|F|<n$. Then more than $k$ matchings are not represented in $F$. Since each of them is larger than $F$, each of them contains an $F$-AAP after taking the union with $F$. Taking the symmetric difference of $F$ and the rainbow alternating path provided by Theorem \ref{thm: akz} yields a  rainbow matching larger than $F$, a contradiction.

     The result $(3n-2, n)\to n$ follows in a similar way, using the following theorem:
     \begin{theorem}\cite{abchs}\label{t.abchs}
     If $F$ is a matching of size $k$ in any graph and $\A=(A_1, \ldots ,A_{2k+1})$ is a family of $F$-AAPs, then there exists a rainbow $F$-AAP. 
     \end{theorem}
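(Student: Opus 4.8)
The plan is to deduce Theorem~\ref{t.abchs} from its bipartite ancestor, Theorem~\ref{thm: akz}, by passing to the bipartite double cover of $G$. Given $G$, the matching $F$ of size $k$, and $F$-AAPs $A_1,\dots,A_{2k+1}$ (say $a_i$ is the initial endpoint of $A_i$), let $G'$ be the graph on $V(G)\times\{0,1\}$ in which each edge $uv$ of $G$ contributes the two edges $(u,0)(v,1)$ and $(u,1)(v,0)$; this $G'$ is bipartite with sides $V(G)\times\{0\}$ and $V(G)\times\{1\}$. Then $F$ lifts to a matching $F'$ of $G'$ of size exactly $2k$, each vertex free for $F$ lifts to two vertices free for $F'$, and each $A_i$, being a simple path, has a unique lift $A_i'$ starting at the copy $(a_i,0)$; one checks immediately that $A_i'$ is again an augmenting $F'$-alternating path. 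Colouring $A_i'$ by $i$, the family $(A_1',\dots,A_{2k+1}')$ is a collection of $2k+1=|F'|+1$ augmenting $F'$-alternating paths in the bipartite graph $G'$, so Theorem~\ref{thm: akz} produces a rainbow augmenting $F'$-alternating path $P$ in $G'$.

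It remains to transport $P$ down to $G$. Its image $W$ under the projection $\pi\colon V(G)\times\{0,1\}\to V(G)$ is an augmenting $F$-alternating \emph{walk} whose edges are in bijection with those of $P$, so its non-$F$ edges still receive $|P|$ distinct colours from $\{1,\dots,2k+1\}$; the difficulty is that $W$ need not be simple, since $\pi$ identifies $(v,0)$ with $(v,1)$ and $P$ may visit both. This is the obstruction I expect to be the heart of the proof, and it is precisely the feature that separates general graphs from bipartite ones: if $W$ revisits a vertex $v$, the two visits occur at positions of opposite parity along $W$ — because the two lifts of $v$ lie on opposite sides of $G'$ — so the subwalk between them is an \emph{odd} $F$-alternating closed walk, i.e.\ (after cleanup) a blossom, and naively excising it reverses the alternation pattern on the rest of $W$. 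The way forward is the standard blossom move: contract a rainbow $F$-alternating blossom $B$ carried by $W$ to obtain $G/B$ with matching $F/B$. Since a blossom on $2b+1$ vertices uses $b$ edges of $F$ and $b+1$ of our colours, we get $|F/B|=k-b$ while at least $2k+1-(b+1)\ge 2(k-b)+1$ colours are still unused, so after checking that the contracted family still supplies enough augmenting $(F/B)$-alternating paths one may recurse and then lift the resulting rainbow path back through $B$. Verifying that each contraction preserves ``enough AAPs'' and that each un-contraction preserves both the alternation and the rainbow condition is where the real work lies; the remainder is a direct appeal to Theorem~\ref{thm: akz}.

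The numerology above also suggests a self-contained alternative, by induction on $k$, mirroring the alternating-path proof of Theorem~\ref{drisko}: maintain a maximal rainbow partial $F$-alternating path, and, when it can neither be extended nor closed up into an $F$-AAP, locate an edge of $F$ whose two traversal directions are obstructed in conflicting ways by the colours not yet used, and recurse after contracting that edge. The point of the double-cover reduction is that it makes transparent why $2k+1$ is the right number: in $G'$ every edge of $F'$ has a canonical orientation (from side $0$ to side $1$) and there are $2k$ of them, so $|F'|+1=2k+1$ colours suffice by Theorem~\ref{thm: akz}; pulling back to $G$, it is the loss of this orientation on the $k$ edges of $F$ that raises the requirement from $k+1$ to $2k+1$.
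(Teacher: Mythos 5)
Your double-cover reduction is sound up to the point where Theorem~\ref{thm: akz} is applied: the lifts $A_i'$ are indeed $F'$-AAPs, $|F'|=2k$, so $2k+1$ colours produce a rainbow $F'$-AAP $P$ in $G'$, and this is a genuinely different (and appealing) way to see where the number $2k+1$ comes from. But what you then have in $G$ is only a rainbow augmenting $F$-alternating \emph{walk} $W$, and converting it into a simple rainbow AAP is exactly the non-bipartite difficulty; the blossom step you sketch does not close it. Taking the first repeated vertex $v$ of $W$, the closed subwalk between its two visits is indeed an odd alternating cycle whose two edges at $v$ lie outside $F$, but $v$ is in general \emph{covered} by $F$ (it is matched into the stem that $W$ follows from its free starting vertex), so you get a blossom with a nonempty stem, not the free-based odd cycle your contraction needs. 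With a matched base the recursion hypothesis fails: an unused colour class need not contain any augmenting $(F/B)$-alternating path in $G/B$. For instance, if $B$ is a triangle $vpq$ with $pq\in F$ and $v$ matched to a stem vertex $u$, and some unused $A_i$ is the path $g_1pqg_2$ with $g_1,g_2$ free, then its image in $G/B$ is $g_1wg_2$ with $w$ matched to $u$, which contains no augmenting path at all. The standard Edmonds remedy, toggling $F$ along the stem so that the base becomes exposed, is unavailable here because it destroys the $F$-alternation of all the other colour classes; and contracting the entire stem-plus-blossom flower (which does make the contracted vertex free and the truncation argument for unused colours work) breaks down at the un-contraction: a rainbow AAP in the contracted graph that ends at $w$ via an edge whose flower endpoint is a stem vertex at odd distance from the free end cannot in general be continued inside the flower to its unique free vertex by an even alternating path. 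So the step you explicitly defer as ``where the real work lies'' is not a routine verification; as stated it is false in the form you need, and the proposal reduces Theorem~\ref{t.abchs} to a statement essentially as hard as the theorem itself.

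For comparison, the paper does not pass through the bipartite theorem at all: it proves the statement by induction on $k$, simultaneously with the stronger assertion that any $2k$ AAPs admitting no rainbow AAP form a badge (Theorem~\ref{thm:rb.a.a.path}). The point it takes care to arrange, and which your projected walk cannot guarantee, is that every odd rainbow alternating cycle it contracts passes through a vertex not covered by $F$ (Claims~\ref{claim: odd cycle} and~\ref{claim: no five cycle}); a free base is precisely what makes both the contraction (the remaining colours still supply AAPs) and the later expansion (every cycle vertex is reachable from the free vertex by an even rainbow alternating path along the cycle) compatible with rainbowness. To salvage your approach you would need a mechanism that produces free-based odd cycles, or a different way of de-touring the projected walk, and that is the missing content.
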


     This is sharp: in the next section we shall introduce ``origamistrips'', that are families of $2|F|$ $F$-AAPs without a rainbow AAP. This means the strategy that works in the bipartite case cannot bring us close to 
     Conjecture \ref{2nconj}. But some additional effort 
    can take us one step further, which is the main result of this paper:

\begin{theorem}\label{thm: main}
$(3n-3,n)\to n$ for any $n \geq 3$.
\end{theorem}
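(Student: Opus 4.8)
The plan is to run the usual maximal-rainbow-matching argument, locate the precise point where it breaks, and then repair the break using the structure of the extremal configurations. Suppose for contradiction that $M_1,\dots,M_{3n-3}$ are matchings of size $n$ with no rainbow matching of size $n$, and let $F$ be a rainbow matching of maximum size $k$, so $k\le n-1$. Exactly $k$ colors are represented by $F$, hence exactly $3n-3-k$ colors are unrepresented; for each unrepresented color $i$ the graph $F\cup M_i$ contains an $F$-AAP whose non-$F$ edges lie in $M_i$ (standard, since $|M_i|=n>k=|F|$), and we fix one such path $A_i$. If $k\le n-2$ then the number of unrepresented colors is $3n-3-k\ge 2n-1\ge 2k+1$, so Theorem \ref{t.abchs} produces a rainbow $F$-AAP among the $A_i$'s; its symmetric difference with $F$ is a larger rainbow matching, a contradiction. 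Hence $k=n-1$, and the number of unrepresented colors is exactly $2n-2=2k$ — precisely one short of the hypothesis of Theorem \ref{t.abchs}.

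So everything hinges on the case $k=n-1$, in which $(A_i)_{i\in U}$ is a family of $2k$ $F$-AAPs admitting no rainbow $F$-AAP, where $U$ is the $2k$-element set of unrepresented colors. The crux of the paper is a structural description of such sharp families: I would first prove that every family of $2k$ $F$-AAPs with no rainbow $F$-AAP is, up to relabelling of colors and the obvious path symmetries, an \emph{origamistrip} — a rigidly organized configuration in which all $2k$ paths arise by successively ``folding'' a single $F$-alternating backbone, colors $A_i$ and $A_{i+k}$ play mirror roles, and every attempt to splice two differently-colored paths into an $F$-AAP is blocked by a forced vertex collision. This classification is the analogue, for the sharp instance of Theorem \ref{t.abchs}, of the classification of extremal configurations for Theorem \ref{drisko}, and I expect it to be the main obstacle: one must show that the mere \emph{non-existence} of a rainbow AAP forces the full origamistrip rigidity rather than only some local obstruction, which calls for an inductive ``peeling'' of the backbone together with careful bookkeeping of which color may represent which edge along it.

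With the classification in hand, the final step extracts a contradiction by combining the rigidity of the origamistrip with the $n-1$ colors that \emph{are} represented by $F$. An origamistrip on $2k$ colors should force the union of the corresponding $2k$ unrepresented matchings, together with $F$, to contain a copy of a badge — the extremal configuration on $2k+1=2n-1$ colors. When $n$ is odd such a subconfiguration cannot persist, as it already yields a rainbow matching of size $n$ among the $2k$ unrepresented colors; when $n$ is even one reroutes through the represented colors: since $|F|=n-1$, a suitable edge of $F$ together with its representing color can be exchanged for two edges supplied by the badge structure, again producing a rainbow matching of size $n$. Lining up the parities, and in particular ruling out the genuine badge examples — which is exactly why the hypothesis is $n\ge 3$ rather than $n\ge 2$ — is the delicate remaining point.

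Finally, the cooperative statement is obtained by running the same scheme with $3n-4+t$ edge-sets $E_1,\dots,E_{3n-4+t}$ in place of the matchings (the case $t=1$ recovering Theorem \ref{thm: main}). A maximum rainbow matching $F$ of size $k$ now leaves at least $3n-4+t-k$ unrepresented sets, and the hypothesis guarantees that the union of every $t$ of them, together with $F$, contains an $F$-AAP. For $k\le n-2$ one has at least $2n-2+t$ such sets, comfortably more than enough for the cooperative refinement of Theorem \ref{t.abchs} that ``$2k+t$ sets, the union of every $t$ of which together with $F$ contains an $F$-AAP, yield a rainbow $F$-AAP''; for $k=n-1$ one is left with $2k-1+t$ sets, one short of this refinement, and the same origamistrip/badge analysis as above closes the gap. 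Thus beyond Theorem \ref{thm: main} the only genuinely new ingredients are the cooperative version of Theorem \ref{t.abchs} and the fact that its sharp case is again governed by origamistrips.
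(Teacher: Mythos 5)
Your high-level skeleton matches the paper's: reduce to a maximum rainbow matching $F$ of size $n-1$, note that the $2n-2=2|F|$ unrepresented colors give $F$-AAPs with no rainbow $F$-AAP (one short of Theorem \ref{t.abchs}), classify the extremal families, and then use the represented colors to force a contradiction. However, there are two genuine gaps. First, the classification you propose to prove is false as stated: a family of $2k$ $F$-AAPs with no rainbow $F$-AAP need not be a single origamistrip. For example, two vertex-disjoint $1$-origamistrips give $4$ AAPs over a matching of size $k=2$ with no rainbow AAP. The correct extremal structure (Theorem \ref{thm:rb.a.a.path}(2)) is a \emph{badge}: a collection of origamistrips indexed by the edges of a weighted multigraph, with each path $P^A$, $P^B$ repeated according to the weight. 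This is not a cosmetic difference: the endgame must treat the multi-origamistrip case separately (the paper's Case II, which needs the observation that each unrepresented $E_i$ equals $(P_i\setminus F)\cup(F\setminus M(OS_i))$ so that a rainbow AAP through one strip stays rainbow after augmenting). Moreover, your proposed proof of the classification is only a sketch ("inductive peeling with bookkeeping") and omits the actual mechanism: producing a rainbow $F$-alternating triangle (Claims \ref{claim: odd cycle} and \ref{claim: no five cycle}), contracting it, applying induction to get a $(k-1)$-badge, and then un-contracting to reconstruct the origamistrip through the contracted $F$-edge.

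Second, your final step is mis-aimed. Parity of $n$ plays no role in deriving the contradiction; it is only relevant to the sharpness example (Observation \ref{sharp}), where the extra matching $M\cup\{xy\}$ is what creates a rainbow matching for odd $n$. In the situation at hand the $2n-2$ unrepresented colors alone never contain a rainbow matching of size $n$ (the multiplicities $n-1$ of $P^A$ and $P^B$ are too small), regardless of parity. The actual contradiction comes from the represented colors: $E_1$ is a matching of size $n$, hence has an edge $e\notin F$, and one feeds $e$ into the edge-maximality of badges (Lemma \ref{lem: add an edge} / Corollary \ref{rmk:extraedge}) to manufacture a rainbow $F$-AAP or a direct rainbow matching; the hypothesis $n\ge 3$ is used only in the sub-case where the badge is a single origamistrip and $E_1$ is a perfect matching of its vertex set containing the edge $xy$, where one needs two disjoint connector edges $e_A\in A(OS)$, $e_B\in B(OS)$ between $e_1$ and $e_2$. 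Without the badge classification and this concrete use of $E_1$, your outline does not close.
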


The proof uses a characterization of the 
examples showing sharpness of Theorem \ref{t.abchs}, namely of those  families of $2k$ AAPs not having a rainbow AAP.  
     
    In the second part of the paper we  prove a ``cooperative'' generalization of the theorem. This means that we are not given matchings, but sets of edges, the union of every $t$ of which ($t$ being a parameter of the result) contains a matching of size $n$. The conclusion is, again, the existence of a rainbow matching of size $n$.

    \subsection{Paths terminology}
    We shall use paths extensively. The default assumption is that paths are undirected. 
Throughout the paper we shall tacitly identify a path with its edge set. So, a path is even if it has an even number of edges.    
    If $P = \{v_1v_2, v_2v_3, \ldots, v_{m-1}v_m\}$, $v_1$ and $v_m$ are called the {\em endpoints} of $P$ and $v_2,\ldots,v_{m-1}$ are called {\em interior vertices} of $P$.
    We sometimes write $P = v_1 v_2 \ldots v_m$.
    Note that $v_1 v_2 \ldots v_m$ and $v_m v_{m-1} \ldots v_1$ are the same path.
    
    For  paths $S = s_1 s_2 \ldots s_p$ and $T= t_1 t_2 \ldots t_q$ (taken in this case in a directed sense)  let $ST$ be the walk $s_1 s_2 \ldots s_pt_1 t_2 \ldots t_q$. In particular, if $x$ is a vertex then $Sx=s_1 s_2 \ldots s_px$ (these will be used below only when the resulting walks are paths).

\section{Badges}
\begin{definition}
An $m$-{\em origamistrip} $OS$ is a graph whose vertex set is
$\{u_1,\ldots u_m, v_1, \ldots ,v_m,x,y\}$ (all $u_i,v_i, x,y$
being distinct) and whose edge set is the union of three disjoint matchings: 
 $M=M(OS)=\{e_1=u_1v_1, \ldots,e_m=u_mv_m\}$; $A(OS)=\{xu_1,v_1u_2,v_2 u_3,\ldots 
 , v_{m-1}u_m, v_my\}$; and $B(OS)=\{xv_1,u_1v_2,\ldots v_{m-1}u_m ,u_m y\}$.
\end{definition}

See Figure~\ref{fig:origamistrips}.
\begin{figure}[ht]
    \centering
    \includegraphics[scale=0.8]{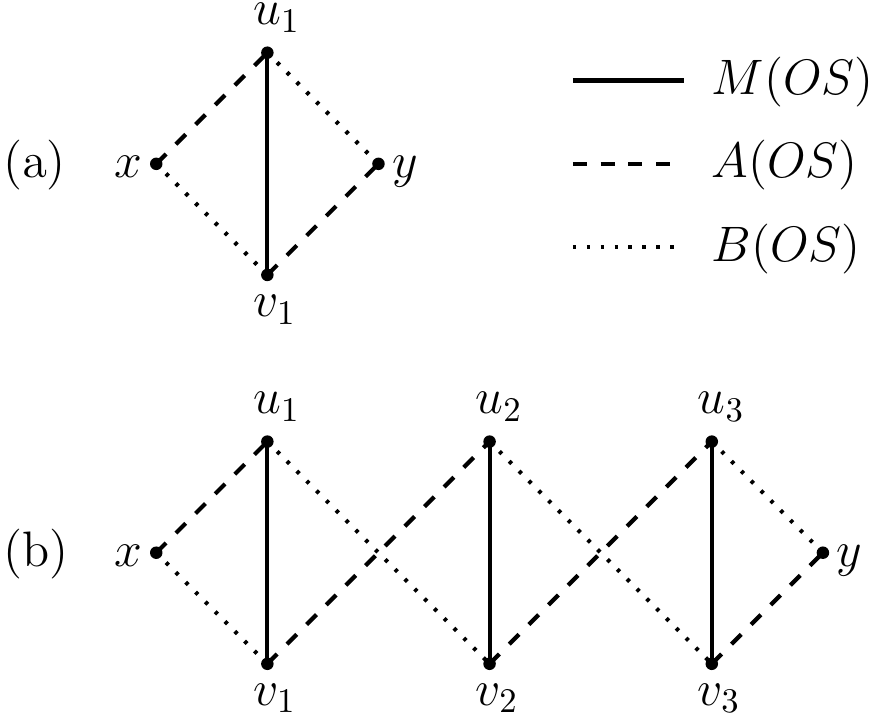}
    \caption{ (a) a $1$-origamistrip, and (b) a $3$-origamistrip.}
    \label{fig:origamistrips}
\end{figure}

There is no real difference between $A(OS)$ and $B(OS)$, the differentiation is  merely for notational convenience.
Then  $P^A(OS):=A(OS)\cup M$ and $P^B(OS):=B(OS)\cup M$ are $M$-AAPs.  

The vertices $x,y$ are called the {\em endpoints} of $OS$, and $u_1,v_1, \dots, u_m, v_m$ are called the {\em interior vertices}. The matching $M$ is called the {\em skeleton} of $OS$.

  We will want to refer to the $M$-alternating paths reaching vertices of the origamistrip, to be named below:   
\begin{observation}\label{qaqb}
 For every interior vertex $v$ of $OS$ there exist an even $M$-alternating path $Q_v^A(OS) \subseteq P^A(OS)$ from an endpoint of $OS$ to $v$ and 
 an even $M$-alternating path $Q_v^B(OS) \subseteq P^B(OS)$ from the other endpoint of $OS$ to $v$. If $x$ is an endpoint of $OS$, write $Q_v^x(OS)$, or simply $Q_v^x$, for whichever of $\{Q_v^A(OS), Q_v^B(OS)\}$ begins at $x$.
 \end{observation}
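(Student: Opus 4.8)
The plan is to make everything explicit: write down the two paths $P^A(OS)$ and $P^B(OS)$ as vertex sequences, note that a contiguous subpath of an $M$-alternating path is again $M$-alternating, and then run a short parity count to see, for each interior vertex $v$, which segment of which path to take and from which endpoint of $OS$ it starts.

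First I would unwind the definitions of $M$, $A(OS)$ and $B(OS)$ to record that
\[
P^A(OS)=x\,u_1\,v_1\,u_2\,v_2\cdots u_m\,v_m\,y, \qquad P^B(OS)=x\,v_1\,u_1\,v_2\,u_2\cdots v_m\,u_m\,y,
\]
each of these being a path of length $2m+1$ whose edges alternate between a non-$M$ edge and an edge of $M$, starting and ending with a non-$M$ edge incident to one of the $M$-uncovered vertices $x$, $y$. (This is exactly the already-recorded fact that $P^A(OS)$ and $P^B(OS)$ are $M$-AAPs.) Consequently every subpath of $P^A(OS)$ or of $P^B(OS)$ is $M$-alternating, so all that is left is to choose the correct subpaths and track their parities and endpoints.

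Next comes the bookkeeping. Reading $P^A(OS)$ from $x$, the vertex $u_i$ occurs at (odd) distance $2i-1$ and $v_i$ at (even) distance $2i$; since the whole path has length $2m+1$, reading from $y$ the vertex $u_i$ occurs at (even) distance $2(m-i+1)$ and $v_i$ at odd distance. So I would let $Q^A_{v_i}(OS)$ be the initial segment of $P^A(OS)$ from $x$ to $v_i$ and $Q^A_{u_i}(OS)$ the terminal segment of $P^A(OS)$ from $y$ to $u_i$; both are even $M$-alternating paths from an endpoint of $OS$ to the named vertex. The same count applied to $P^B(OS)$, whose edge at $x$ now runs to $v_1$, shows that the initial segment from $x$ to $u_i$ has even length $2i$ and the terminal segment from $y$ to $v_i$ has even length $2(m-i+1)$, and I would take those as $Q^B_{u_i}(OS)$ and $Q^B_{v_i}(OS)$. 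It then remains to check the ``other endpoint'' clause: for a vertex of the form $u_i$ the path $Q^A$ begins at $y$ and $Q^B$ at $x$, while for a vertex of the form $v_i$ the path $Q^A$ begins at $x$ and $Q^B$ at $y$; in both cases they start at the two distinct endpoints of $OS$, which is what is required and which makes the notation $Q^x_v$ well defined.

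There is no genuine obstacle in the argument: the only point to be careful about is that ``initial segment'' and ``terminal segment'' swap roles between the $u_i$'s and the $v_i$'s, precisely so that $Q^A_v$ and $Q^B_v$ always emerge from opposite ends of $OS$ — so the entire content is this parity-and-index bookkeeping.
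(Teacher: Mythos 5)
Your argument is correct: the paper states this as an Observation with no written proof, treating it as immediate from the definition of an origamistrip, and your explicit parity-and-index bookkeeping (initial segments of $P^A$, $P^B$ for the $v_i$'s resp.\ $u_i$'s, terminal segments for the others) is exactly the verification the paper leaves implicit. In particular your check that $Q^A_v$ and $Q^B_v$ always start at distinct endpoints, which justifies the notation $Q^x_v$, is the right point to make explicit.
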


$m$-origamistrips can be used to show the sharpness of Conjecture \ref{2nconj}.

 \begin{observation}\label{sharp}
Let $n$ be even, and let $OS$ be an $(n-1)$-origamistrip with skeleton $M$. Taking $n-1$ copies of $A(OS)$, $n-1$ copies of $B(OS)$ and the matching $M \cup \{xy\}$ provides an example of $2n-1$ matchings of size $n$, having no rainbow matching of size $n$. 
 \end{observation}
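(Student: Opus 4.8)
The plan is to exhibit the $2n-1$ matchings explicitly and then show directly that no system of distinct representatives yields a matching of size $n$. Set $M = \{e_1, \dots, e_{n-1}\}$ with $e_i = u_iv_i$, together with the vertices $x,y$ of the $(n-1)$-origamistrip $OS$. First I would list the colors: colors $1, \dots, n-1$ are copies of $A(OS)$, colors $n, \dots, 2n-2$ are copies of $B(OS)$, and color $2n-1$ is the matching $N := M \cup \{xy\}$. Each is a matching of size $n$: indeed $A(OS)$ and $B(OS)$ are (perfect) matchings on the $2n$ vertices $\{u_i,v_i\} \cup \{x,y\}$, each using exactly $n$ edges, and $N$ clearly has $n$ edges.

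Now suppose for contradiction that there is a rainbow matching $R$ of size $n$. Since there are only $2n-1$ colors, $R$ misses at most $n-1$ colors; in particular $R$ uses at least one edge from the $A$-copies, at least one from the $B$-copies, \emph{or} we argue via a counting/parity obstruction. The cleaner route: observe that every edge appearing in any of the $2n-1$ matchings lives on the vertex set $V(OS)$, which has exactly $2n$ vertices, so a matching of size $n$ on these vertices must be \emph{perfect} — it saturates every vertex of $OS$, in particular $x$ and $y$. The key structural fact is then that the only edges incident to $x$ in the whole configuration are $xu_1$ (from every $A$-copy), $xv_1$ (from every $B$-copy), and $xy$ (from $N$); similarly the only edges at $y$ are $v_{n-1}y$, $u_{n-1}y$, and $xy$. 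So $R$ either contains $xy$, or it contains one $x$-edge from an $A$- or $B$-copy and one (distinct, differently-colored) $y$-edge from an $A$- or $B$-copy.

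The heart of the argument is a "propagation along the strip" claim: I would show that a perfect rainbow matching on $V(OS)$ forces a contradiction by tracking how edges of $M$ get used. If $R$ contains $xy$, then $R$ must match $\{u_1, v_1, \dots, u_{n-1}, v_{n-1}\}$ using edges drawn from the $A$- and $B$-copies and from $M$ itself (via the $N$-color, which is already spent on $xy$), i.e.\ using only $A$/$B$ edges, none of which is $xy$; but the $A$/$B$ edges restricted to the interior vertices form exactly two disjoint paths $u_1 v_1 u_2 v_2 \cdots$ (from $A \setminus \{xu_1, v_{n-1}y\}$ and $B \setminus \{xv_1, u_{n-1}y\}$), and a perfect matching on a path of $2(n-1)$ vertices needs $n-1$ edges all of one color — impossible with at most one edge per color unless $n-1 \le 1$, and for $n-1\ge 2$ one must mix colors, which forces a vertex to be left unmatched by a short counting check along the two paths. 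If instead $R$ avoids $xy$ but uses the $N$-color on some $e_i = u_iv_i$, then $e_i$ splits the remaining interior vertices into two sub-strips, and the same path-matching parity argument applies to each piece together with the forced $x$- and $y$-edges; a careful bookkeeping of how many distinct colors each segment consumes versus how many are available yields the contradiction. The main obstacle, and the step I would spend the most care on, is exactly this last bookkeeping: making the "every maximal alternating segment of $R$ inside the strip must be monochromatic, hence has length $\le 1$" argument fully rigorous, ideally by using Observation \ref{qaqb} to describe precisely which alternating paths $Q_v^A, Q_v^B$ can appear and then counting.
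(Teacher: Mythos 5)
Your case analysis has the right skeleton (perfect matching on the $2n$ vertices, few edges available at $x$ and $y$), but the decisive step is wrong, and the clearest symptom is that you never use the hypothesis that $n$ is even. The statement is genuinely false for odd $n$: as the paper notes, $\{xy\}\cup\{u_{2i-1}v_{2i},\, v_{2i-1}u_{2i}\}$ is a rainbow matching of size $n$, and it lives exactly in your ``$R$ contains $xy$'' case, so a parity-free argument there cannot be correct. Concretely, your description of the interior structure is garbled: the interior $A$-edges $\{v_1u_2,\dots,v_{n-2}u_{n-1}\}$ form a matching, not a path, and deleting $x,y$ from $A(OS)\cup B(OS)$ (a Hamiltonian $2n$-cycle) leaves the two paths $u_1v_2u_3v_4\cdots$ and $v_1u_2v_3u_4\cdots$, each alternating between $A$- and $B$-edges and each on $n-1$ vertices. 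Your claimed obstruction --- that a perfect matching of the interior would need ``$n-1$ edges all of one color,'' which is ``impossible with at most one edge per color'' --- is not an obstruction at all: there are $n-1$ distinct copies of $A(OS)$, so $n-1$ edges of $A(OS)$ can be rainbow. The correct obstruction in this case is pure parity: when $n$ is even, each of the two interior paths has an odd number of vertices, so the interior admits no perfect matching by crossing edges whatsoever, colors aside. You also omit the case in which $R$ uses no edge of the color $M\cup\{xy\}$; that is precisely where the count of copies does matter (matching $x$ forces, column by column, $R=A(OS)$ or $R=B(OS)$, i.e.\ $n$ edges from only $n-1$ copies). Finally, the ``careful bookkeeping'' you defer in the remaining case (one $M$-edge $e_j\in R$, $xy\notin R$) is not hard --- the same forced propagation from $x$ gets stuck at column $j$ --- but as written it is a gap, and it is the easy case, not the hard one.

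For comparison, the paper does not argue vertex-by-vertex at all: it derives Observation \ref{sharp} from Observation \ref{noraap}, i.e.\ from the fact that the $2(n-1)$ copies of $P^A(OS)$ and $P^B(OS)$ admit no rainbow $M$-AAP, so a rainbow matching of size $n$ avoiding the color $M\cup\{xy\}$ (or using it on an edge of $M$) would, via its symmetric difference with the skeleton $M$, yield a forbidden rainbow augmenting path; only the case $xy\in R$ then needs the separate parity observation above. Your direct approach can be made to work, but you would need to fix the structural description, add the missing case, and replace the color-count claim by the parity argument that actually uses $n$ even.
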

 
  For $n$ even this is one of few examples for sharpness we know. For $n$ odd, the example does not work: it has a rainbow matching of size $n$, consisting of the edge $xy$ and pairs $u_{2i}v_{2i-1}, ~u_{2i-1}v_{2i}$.

The following explains why Observation \ref{sharp} is true:
\begin{observation}\label{noraap}
 Let $OS$ be an $m$-origamistrip with skeleton $M$, and let $\Q$ be the collection of paths containing each of $P^A(OS)$ and $P^B(OS)$, each repeated $m$ times. Then there is no $\Q$-rainbow $M$-AAP. 
 \end{observation}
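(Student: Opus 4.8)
The plan is to observe that, although the statement quantifies over all $M$-AAPs, the only ones that could conceivably be $\Q$-rainbow are $P^A(OS)$ and $P^B(OS)$ themselves, and then to rule those two out by a one-line counting argument.

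First I would localise the problem to $OS$. Every colour in $\Q$ is a subset of $E(OS)$, so the non-$M$ edges of any $\Q$-rainbow $M$-AAP lie in $E(OS)$; since also $M\subseteq E(OS)$, the whole path lies in $OS$, and in particular both of its endpoints lie in $V(OS)$. An endpoint of an $M$-AAP is $M$-unmatched, and the only $M$-unmatched vertices of $OS$ are $x$ and $y$, so any candidate is an $x$--$y$ path.

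Next I would pin down all $M$-AAPs from $x$ to $y$ in $OS$ by the standard forcing argument. Such a path must leave $x$ along $xu_1\in A(OS)$ or along $xv_1\in B(OS)$. If it starts with $xu_1$, then at $u_1$ the unique $M$-edge is $e_1=u_1v_1$, forcing the next vertex $v_1$; at $v_1$ the non-$M$ edges of $OS$ are $v_1u_2$ and $xv_1$, and the latter returns to the already-visited $x$, forcing the next vertex $u_2$; continuing, after reaching $v_i$ the competing $B(OS)$-edge ($u_{i-1}v_i$, or $xv_1$ when $i=1$) always leads to a visited vertex, so the continuation is forced to be $v_iu_{i+1}$ for $i<m$ and $v_my$ for $i=m$. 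The path cannot stop at any $u_i$ or $v_i$, since these are $M$-matched and hence not endpoints, and it must stop at $y$, since $y$ carries no $M$-edge along which the alternation could continue. Hence the path is exactly $P^A(OS)$; symmetrically the choice $xv_1$ forces $P^B(OS)$.

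Finally, since $A(OS)$, $B(OS)$ and $M$ are pairwise edge-disjoint, the non-$M$ edges of $P^A(OS)$ are precisely the $m+1$ edges of $A(OS)$, and each of these lies in $P^A(OS)$ but in none of the $m$ copies of $P^B(OS)$ in $\Q$; a $\Q$-rainbow choice would thus need $m+1$ distinct colours among the $m$ copies of $P^A(OS)$, which is impossible, and the symmetric count rules out $P^B(OS)$. I expect the only point needing real care to be the forcing step of the previous paragraph: one must check at every vertex that the unvisited, alternation-consistent continuation is unique, repeatedly using both that the interior vertices of $OS$ are $M$-matched and that a $\Q$-rainbow $M$-AAP cannot leave $OS$.
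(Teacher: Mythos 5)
Your proof is correct. The paper states this as an Observation without giving any proof, and your argument -- localizing the candidate path to $OS$, showing by the forcing/alternation argument that the only $M$-AAPs between the unmatched vertices $x$ and $y$ inside $OS$ are $P^A(OS)$ and $P^B(OS)$, and then pigeonholing their $m+1$ non-$M$ edges against the only $m$ usable copies of the relevant colour -- is precisely the intended (implicit) reasoning, so there is nothing to compare beyond confirming it is sound.
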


We now introduce badges, which are a systematic way of combining multiple origamistrips.

\begin{definition}
A {\em badge}  $\cb$ is collection of paths, obtained from a multigraph $H$ and an integer weighting $w$ on its edge set, in the following way. For each edge $e=xy$ of $H$ let $OS(e)$ be a $w(e)$-origamistrip  with endpoints $x,y$, where all 
$OS(e)$s have disjoint interior vertex sets.
The badge $\cb$ is then the collection of all paths $P^A(OS(e)), P^B(OS(e))$, each repeated $w(e)$ times. Let $M(\cb)= \bigcup_{e \in E(H)}M(OS(e))$ and $w(\cb)=\sum_{e \in E(H)}w(e)$.
We also call  $\cb$ is a $k$-\emph{badge}, where $k=w(\cb)$, and we call  $M(\cb)$ its  \emph{skeleton}.
\end{definition}
Thus $\cb$ consists of $2w(\cb)$ many $M(\cb)$-AAPs. For $v \in \bigcup M(\cb)$ let $OS_v$ be the origamistrip in $\cb$ that contains $v$.

See Figure~\ref{fig:badge} for an example of an $8$-badge.
Here, $\mathcal{B}$ is the badge obtained from $H$ with integer weighting $w$, where $w(e_1) = 3$, $w(e_2) = w(e_4) = 2$, and $w(e_3) = 1$.
For each $i \in \{1,2,3,4\}$, $\mathcal{B}$ contains two paths $P^A(OS(e_i))$ and $P^B(OS(e_i))$ in $OS(e_i)$, each repeated $w(e_i)$ times.
Let us describe two kinds of paths of $\mathcal{B}$:
\begin{itemize}
    \item The union of the dotted edges and $M(OS(e_1))$ is $P^A(OS(e_1))$, repeated $3$ times.
    \item The union of the dashed edges and $M(OS(e_4))$ is $P^B(OS(e_4))$, repeated $2$ times.
\end{itemize}
\begin{figure}[ht]
    \centering
    \includegraphics[scale=1]{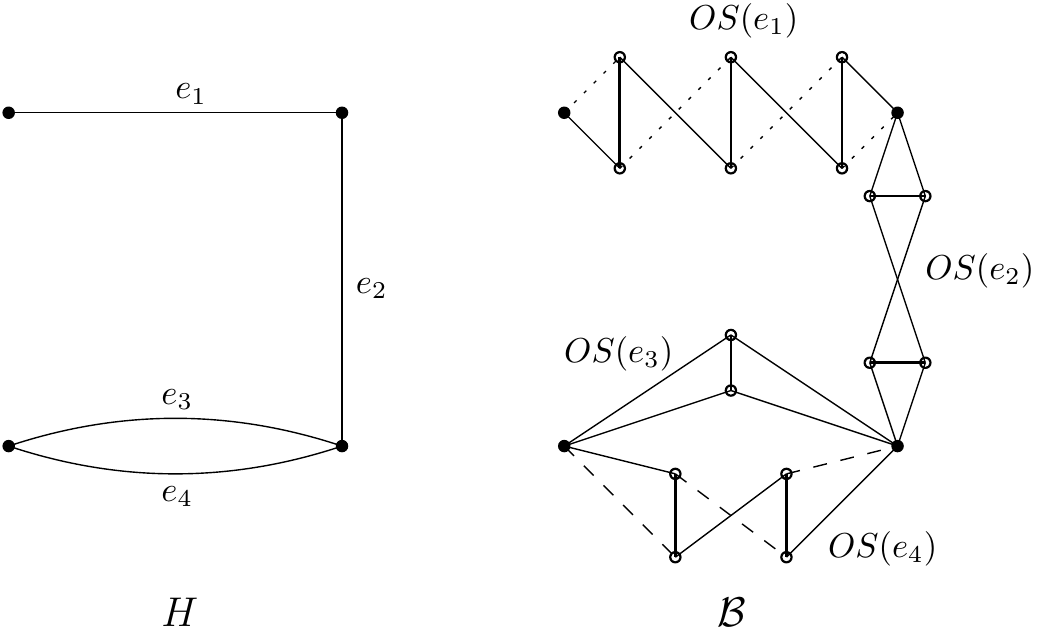}
    \caption{$\mathcal{B}$ is the badge obtained from $H$ with   weighting $w$, where  $w(e_1) = 3$, $w(e_2) = w(e_4) = 2$, and $w(e_3) = 1$.}
    \label{fig:badge}
\end{figure}

In the next section, on cooperative versions of the main theorem, we shall also use something more general:
\begin{definition}[generalized badges]\label{genbadge} 
Suppose we modify a badge $\B$ by allowing the addition and the deletion of edges from $M(\cb)$ to each $P^A(OS(e))$ and to each $P^B(OS(e))$. The resulting construction is then called a {\em generalized badge}.  
\end{definition}

By Observation \ref{noraap} there is no rainbow $M(\cb)$-AAP in any origamistrip, hence in all of $\cb$. We next show that $\cb$ is edge-maximal with respect to this property.

\begin{lemma}\label{lem: add an edge}
Let $\cp$ be a badge with skeleton $F$. 
 Let
$\cp^+$ be obtained from $\cp$ by replacing one path $P \in \cp$  by $P^+=P \cup \{xy\}$, where $xy \not \in P$ and $xy \not \in F$. Then  $\cp^+$
has a rainbow $F$-AAP.
\end{lemma}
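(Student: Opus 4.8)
\textbf{Setup and strategy.} The plan is to exploit the rigid structure of a badge $\cp$: it is a union of origamistrips $OS(e)$, $e \in E(H)$, sitting over a multigraph $H$, and its skeleton is $F = \bigcup_e M(OS(e))$. We add a new edge $xy$ (not in any path of $\cp$ and not in $F$) to one path $P \in \cp$; say $P = P^A(OS(e_0))$ or $P^B(OS(e_0))$ for a fixed edge $e_0 = ab$ of $H$ with endpoints $a,b$. The point is that we now have one extra edge available, and we must build a rainbow $F$-AAP out of the other $2w(\cp)-1$ paths plus this one $P^+$. The whole construction is path-like in a global sense, so the natural move is to splice together the even $M$-alternating ``half paths'' $Q_v^A(OS), Q_v^B(OS)$ provided by Observation~\ref{qaqb}, using the new edge $xy$ as a bridge.

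\textbf{The key case analysis.} I would split on where the endpoints $x,y$ of the new edge lie relative to $OS(e_0)$ and to $H$. \emph{Case 1: $x$ and $y$ are interior vertices of origamistrips in $\cp$.} Write $x \in OS(f)$, $y \in OS(g)$ for edges $f,g \in E(H)$ (possibly $f=g$, possibly one of them equals $e_0$). Since $H$ is connected along the edges carrying origamistrips (or at least on the component we care about — one should first reduce to the component containing $e_0$, the rest of $\cp$ being irrelevant), pick one endpoint $p$ of $f$ and one endpoint $q$ of $g$ such that there is a path in $H$ from $p$ to $q$ avoiding... — actually the cleaner approach: use $Q_x^{p}(OS(f))$ from a suitable endpoint $p$ of $f$ to $x$, then the edge $xy$, then $Q_y^{q}(OS(g))$ from $y$ to a suitable endpoint $q$ of $g$; then extend along $H$. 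Crucially the colors used so far are: for $OS(f)$ only $A$- or $B$-colors of $OS(f)$ up to $x$, similarly for $OS(g)$, and the new edge $xy$ carries color $P$ (the modified path). The parity works because each $Q_v^\bullet$ is even and $M$-alternating ending with a non-$F$ edge at $v$; concatenating two such at the bridge $xy$ gives an $F$-alternating augmenting structure provided $xy$ is flanked correctly — one checks $Q_x^p$ ends at $x$ with an $F$-edge incident, hmm, need to be careful: $Q_v^\bullet$ goes from an endpoint of $OS$ (which is not covered by $M(OS)$ inside that strip, but may be covered by $F$ via an adjacent strip) to $v$. \emph{Case 2: $x$ or $y$ is an endpoint $z$ of the multigraph $H$} (i.e.\ a vertex of $H$, not an interior origamistrip vertex) — then $z$ is either $F$-unsaturated (if it has degree considerations allowing it) or saturated by some strip; in the unsaturated subcase $xy$ can directly start an AAP.

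\textbf{Building the rainbow AAP and walking through $H$.} In each case, after using the bridge $xy$ I would be left needing to reach two $F$-unsaturated vertices. The endpoints $x,y$ of the multigraph $H$ that are not covered by $M(\cp)$ serve as the ``free ends'': I would route from the splice point out through a path in $H$, at each edge $e$ of $H$ traversing $OS(e)$ using one of the two colors $P^A(OS(e))$, $P^B(OS(e))$ — and since each such origamistrip color is available with multiplicity $w(e)$ and we pass through $e$ at most... here is the subtlety: a path in $H$ visits each edge at most once, so multiplicity $\geq 1$ suffices, \emph{except} possibly at $e_0$ where one copy of one path was consumed by the modification; but $e_0$ still has either $w(e_0) \geq 2$ (so a spare copy) or $w(e_0)=1$ and then $OS(e_0)$ is a single origamistrip that we handle by using its \emph{other} color $P^B$ if $P$ was $P^A$. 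One needs $H$ (restricted to the relevant component, and possibly after re-rooting) to admit two edge-disjoint paths from the splice region to two distinct $F$-unsaturated vertices; the $F$-unsaturated vertices of $\cp$ are exactly the endpoints of $H$ of degree... — actually in a badge \emph{every} vertex of $H$ is $F$-unsaturated, since $F$ only covers interior origamistrip vertices. So any vertex of $H$ is a valid terminus. Thus I just need, from $x$ (inside $OS(f)$) a path in $H$ starting at one endpoint of $f$ and from $y$ a path starting at one endpoint of $g$, chosen edge-disjoint — always possible by choosing the two paths to go ``away'' from each other, or by a short parity/connectivity argument, being careful when $f = g$.

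\textbf{Main obstacle.} The hard part will be the bookkeeping of \emph{colors}: ensuring that when the $H$-walk from $x$ and the $H$-walk from $y$ together with the two half-origamistrips $Q_x^\bullet, Q_y^\bullet$ and the bridge $xy$ are concatenated, no color (origamistrip-path) is used twice. Edge-disjointness of the two $H$-walks handles different edges $e$; within a single strip $OS(f)$ it is crucial that the portion $Q_x^p(OS(f))$ uses a color of one ``side'' and the $H$-walk leaving through $f$'s \emph{other} endpoint does not re-enter $OS(f)$ — which is why the two $H$-walks must be edge-disjoint and why $Q_x^p$ should exit toward the endpoint $p$ that lies on the chosen $H$-walk. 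Degenerate cases $f=g$, or $f=e_0$, or $x,y$ in the same origamistrip, or $x$/$y$ a vertex of $H$, each need a tailored splice, and verifying the $F$-alternating/augmenting parity at every junction (each $Q_v^\bullet$ is even, the bridge is one non-$F$ edge, each strip-crossing is even) is the routine-but-delicate core. I expect these parity checks plus the $w(e_0)=1$ corner case to be where essentially all the work sits.
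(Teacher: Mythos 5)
Your central splice --- an even alternating path from Observation~\ref{qaqb} into $x$, the bridge $xy$ coloured by $P^+$, and an even alternating path out of $y$ --- is the same construction the paper uses, but as written your argument has a genuine gap: you never commit to the one fact that makes the lemma short, namely that in a badge the skeleton $F$ covers only \emph{interior} origamistrip vertices, so every origamistrip endpoint (every vertex of the multigraph $H$) is $F$-unsaturated. You notice this in passing, yet you still organize the proof around routing two edge-disjoint walks through $H$, with per-edge multiplicity bookkeeping and a special analysis when $w(e_0)=1$. None of that machinery is needed (the trivial, length-zero walks already suffice): the path $Q_x^p(OS_x)\cup\{xy\}\cup Q_y^q(OS_y)$ starts and ends at $F$-unsaturated vertices, so it is itself the desired $F$-AAP once two things are checked, and these are exactly the things you defer. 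First, simplicity: when $OS_x\neq OS_y$ one takes $Q^B_x(OS_x)$ and lets $z$ be the endpoint of $OS_y$ different from the starting endpoint of $Q^B_x(OS_x)$, so the two halves cannot collide; the case $OS_x=OS_y$ needs its own (easy) choice of sides. Second, colours: assuming without loss of generality $P=P^A(OS_x)$, the $x$-half runs along the $B$-side of $OS_x$, whose $m$ untouched copies of $P^B(OS_x)$ cover the at most $m$ non-$F$ edges of $Q^B_x(OS_x)$, while $P^+$ colours $xy$ and the colours of $OS_y$ are disjoint from those of $OS_x$. Your own closing sentence concedes that ``these parity checks plus the $w(e_0)=1$ corner case'' are where the work sits; since they are not carried out, what you have is a plan rather than a proof, and the plan is encumbered by the superfluous $H$-routing and edge-disjointness requirements.

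Two further omissions. You do not treat the degenerate placements of the new edge: if $x,y\notin\bigcup F$ then $xy$ alone is a rainbow $F$-AAP, and if only $y\notin\bigcup F$ then $Q^B_x(OS_x)\cup\{xy\}$ already works; your ``Case 2'' gestures at this but its premise (``$z$ is either $F$-unsaturated \dots or saturated by some strip'') is wrong for badges, since a vertex of $H$ is never saturated by the skeleton. Fixing the proof amounts to discarding the $H$-walk apparatus, stating the unsaturation of strip endpoints explicitly, and then performing the simplicity and colour-counting checks above, which is precisely the paper's short case analysis.
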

\begin{proof}
If $x, y \notin \bigcup F$, then $xy$ is by itself a rainbow $F$-AAP. 
So, without loss of generality we may assume that $x \in \bigcup F$. 
Also without loss of generality, $P = P^A(OS_x)$.
If $y \notin \bigcup F$, then the $F$-AAP path $Q^B_x(OS_x) \cup \{xy\}$ (see Observation \ref{qaqb} for the definition of $Q^B_x(OS_x)$) is $\cp^+$-rainbow. The reason is that there are  enough paths in $B(OS_x)$ to color the edges of $Q^B_x(OS_x)$, while $P^+$ can color $xy$.  

So, we may assume that $y \in \bigcup F$ as well. 

\smallskip

{\bf Case I.} $OS_x \neq OS_y$. 
Let $z$ be an endpoint of $OS_y$ which is not an endpoint of $Q^B_x(OS_x)$. 
Then $Q^B_x(OS_x) \cup \{xy\} \cup Q^z_y(OS_y)$ is a $\cp^+$-rainbow $F$-AAP. 

{\bf Case II.}  $OS_x = OS_y$.
Then a path of the form $Q_x^p(OS_x) \cup \{xy\} \cup Q_y^q(OS_y)$, where each of $p,q$
is either $A$ or $B$, is a $\cp^+$-rainbow $F$-AAP. 
\end{proof}

\begin{corollary}
\label{rmk:extraedge}
If $\cp$ is a badge with skeleton $F$ and $\cp^+=\cp \cup \{\{xy\}\}$ for some $xy \notin F$, then $\cp^+$ has a rainbow $F$-AAP.
\end{corollary}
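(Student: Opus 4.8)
The plan is to derive this from Lemma~\ref{lem: add an edge} with essentially no extra work, exploiting a structural feature of that lemma's proof: the ``new'' path $P^+=P\cup\{xy\}$ contributes to the rainbow $F$-AAP that is produced there only through the single edge $xy$. Indeed, in each case of that proof, every edge of the constructed rainbow $F$-AAP other than $xy$ is a non-skeleton edge of one of the origamistrips of $\cp$, represented by one of that origamistrip's paths, while $xy$ itself is represented by $P^+$. So the first thing I would record is exactly this observation.

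Given it, the proof of the corollary writes itself. Run the same case analysis (on whether $x$ and $y$ lie in $\bigcup F$, and on whether $OS_x=OS_y$), build the same $F$-AAP, but now assign the edge $xy$ to the fresh color $\{xy\}\in\cp^+$ rather than to $P^+$. Every other representative already lies in $\cp\subseteq\cp^+$ and is used with the same multiplicity as before, so the bookkeeping in the lemma's proof on how many copies of each origamistrip path are available carries over verbatim; hence we obtain a $\cp^+$-rainbow $F$-AAP. If one prefers a black-box reduction: pick a path $P\in\cp$ with $xy\notin P$, which is possible because for any origamistrip $OS(e)$ occurring in $\cp$ one has $P^A(OS(e))\cap P^B(OS(e))=M(OS(e))\subseteq F$, so $xy\notin F$ forces $xy$ to miss at least one of these two paths (both of which occur in $\cp$); apply Lemma~\ref{lem: add an edge} to $(\cp\setminus\{P\})\cup\{P\cup\{xy\}\}$, and in the resulting rainbow $F$-AAP recolor whatever $P\cup\{xy\}$ represents --- if it is $xy$, recolor it by $\{xy\}$, and if it is some other edge of $P$, recolor it by $P$ --- in either case by a color of $\cp^+$ not otherwise used.

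I do not expect a genuine obstacle; the statement is a routine strengthening of the lemma. The only points needing a line of care are the two degenerate cases --- $\cp$ empty (a $0$-badge, so $F=\varnothing$) and $x,y\notin\bigcup F$ --- in which one simply notes that $\{xy\}$ is already the desired rainbow $F$-AAP, colored by the new color; and the fact, already contained in the lemma's proof, that reusing the origamistrip paths of $\cp$ never over-subscribes any color.
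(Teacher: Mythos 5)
Your proposal is correct and follows essentially the same route as the paper, whose one-line proof is exactly your black-box reduction: insert $xy$ into a path of $\cp$ not containing it and invoke Lemma~\ref{lem: add an edge}. You merely make explicit two points the paper leaves implicit --- that such a path exists (since $P^A$ and $P^B$ of an origamistrip meet only in skeleton edges and $xy\notin F$) and that the representative of $P\cup\{xy\}$ can be recolored by $\{xy\}$ or by the spare copy of $P$ available in $\cp^+$ --- both of which check out.
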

To see this,  add $xy$ to any path to which it does not belong, and use the lemma.

\section{Badges and alternating paths}
The main step towards the proof of Theorem \ref{thm: main} is showing that badges are  critical obstructions for the existence of rainbow AAPs.
 
\begin{theorem}\label{thm:rb.a.a.path}
Let $F$ be a matching of size $k$ in a  graph $G$, and let $\cp$ be a family of $F$-AAPs.
Suppose $\cp$ has no rainbow $F$-AAP.
Then,
\begin{enumerate}
    \item $|\cp| \leq 2k$, and 
    \item if $|\cp| = 2k$ then $\cp$ is a $k$-badge. 
\end{enumerate}
\end{theorem}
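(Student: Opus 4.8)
The plan is to argue by induction on $k$, using the structure of $F$-AAPs relative to the matching $F$. First I would set up the key technique: for a matching $F$ of size $k$ and a family $\cp$ of $F$-AAPs with no rainbow $F$-AAP, I would look at how the paths interact with a chosen edge $e = xy \in F$. Each path $P \in \cp$, being $F$-alternating, either avoids $e$ entirely, or passes through $e$ (possibly using $x$ and/or $y$ as endpoints of subpaths). The idea is to ``contract'' or ``remove'' $e$ and reduce to an instance with matching $F \setminus \{e\}$ of size $k-1$, to which the induction hypothesis applies, giving $|\cp'| \le 2(k-1)$; I would then account carefully for how many paths of $\cp$ can genuinely ``use'' $e$ in a way that doesn't survive the reduction, and show this surplus is at most $2$. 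The careful bookkeeping here — which paths survive as AAPs after removing $e$, and what it means for two paths to be ``non-rainbow-combinable'' through $e$ — is where I expect the real work to be, and this is also where Theorem~\ref{thm: akz} (the bipartite rainbow AAP result) is likely to be invoked on the ``bipartite part'' of the configuration obtained by orienting alternating paths.

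For part (1), the base case $k = 1$ should be handled directly: with $F = \{xy\}$ a single edge, any $F$-AAP either is a single edge $x'y'$ disjoint from $F$ (immediately rainbow), or has the form $Q\cup\{xy\}\cup Q'$ where $Q, Q'$ are even alternating paths from $F$-unsaturated vertices to $x$ and $y$ respectively — and one checks that at most two such paths can fail to yield a rainbow AAP when combined, which recovers $|\cp| \le 2$. The inductive step for (1) then flows from the reduction above: after removing an appropriately chosen edge $e$ of $F$, the residual family of AAPs with respect to $F \setminus \{e\}$ still has no rainbow AAP (any rainbow AAP there would lift back), so by induction it has size $\le 2(k-1)$, and the number of deleted/modified paths is at most $2$.

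For part (2) — the characterization of the extremal case — I would analyze the equality case of the above reduction. When $|\cp| = 2k$, the residual family must have exactly $2(k-1)$ paths and be a $(k-1)$-badge by induction, and exactly $2$ paths of $\cp$ must ``essentially use'' the removed edge $e = xy$. The structure of a badge around its skeleton, together with the two extra paths through $e$, should force the two extra paths to be $P^A(OS(e))$ and $P^B(OS(e))$ of a new $w(e) = 1$ origamistrip attached at the right vertices — but more care is needed, since in general $w(e)$ can be larger than $1$, so I would instead peel off a whole origamistrip $OS(e)$ at once rather than a single skeleton edge. The main obstacle, as I see it, is controlling the ``gluing'': showing that the endpoints $x, y$ of the peeled origamistrip are exactly where the badge multigraph $H$ should have its corresponding edge, i.e., that there is no stray alternating path connecting the two parts in a way that would either create a rainbow AAP (contradiction) or violate the badge structure. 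Corollary~\ref{rmk:extraedge} and Lemma~\ref{lem: add an edge} will be the crucial rigidity tools here: they say that a badge is edge-maximal, so any additional edge or path forces a rainbow AAP, which pins down precisely which configurations of $2k$ paths are feasible.
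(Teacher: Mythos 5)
There is a genuine gap at the core of your reduction. You propose to delete a chosen edge $e=ab\in F$, pass to $F'=F\setminus\{ab\}$, and claim both that ``the residual family still has no rainbow AAP (any rainbow AAP there would lift back)'' and that the surplus of paths lost is at most $2$. Neither claim holds. First, every path $P=R\,ab\,S$ survives the deletion in truncated form (e.g.\ $Ra$ or $bS$ is an $F'$-AAP, since $a,b$ become unsaturated), so the residual family still has about $2k>2(k-1)$ members and the induction hypothesis \emph{forces} a rainbow $F'$-AAP; the whole difficulty is that this rainbow $F'$-AAP may terminate at $a$ or at $b$ and then does \emph{not} lift to a rainbow $F$-AAP, because extending it through $ab$ requires a further non-$F$ edge of an unused color leaving the other endpoint. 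Second, the surplus count ``at most $2$ paths essentially use $e$'' is false in exactly the extremal situation you must analyze: in a $k$-badge, every skeleton edge of an $m$-origamistrip lies on all $2m$ of its paths, so for $m\ge 2$ many paths pass through $e$ and your bookkeeping breaks down. The missing idea, which is the heart of the paper's argument, is to first prove that the failure of lifting can only happen in a very rigid way: there exists a rainbow $F$-alternating \emph{triangle} $\{va,ab,bv\}$ with $v\notin\bigcup F$ (this is extracted by deleting the vertex $a$, not the edge, and analyzing where the forced rainbow $F'$-AAP can end), and no longer odd rainbow alternating cycle exists. One then contracts this triangle -- not a matching edge -- to a vertex $w$; contraction is the right operation precisely because any rainbow AAP through $w$ can be completed back in $G$ using the triangle edges, whose colors $T_a,T_b$ were set aside.

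Your plan for part (2), peeling off a whole origamistrip and using Lemma~\ref{lem: add an edge} and Corollary~\ref{rmk:extraedge} as rigidity tools, is the right instinct and matches the paper's steps (iv)--(v), but it cannot get started without the triangle-contraction step above: identifying \emph{which} origamistrip of the contracted $(k-1)$-badge the edge $ab$ opens into requires knowing exactly how the paths through $ab$ meet the origamistrip ending at the contracted vertex $w$, and ruling out the stray configurations uses both the ``no odd rainbow cycle of length $>3$'' claim and the rigidity fact that two badges cannot differ in a single path. Also note the paper runs the induction the other way around: it proves (2) inductively and derives (1) from (2) via Corollary~\ref{rmk:extraedge}, whereas your proposal tries to establish (1) first by a reduction that, as explained, does not preserve the no-rainbow-AAP hypothesis; and the appeal to Theorem~\ref{thm: akz} on a ``bipartite part'' plays no role in any correct version of this argument that I can see.
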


Part (1) is Theorem \ref{t.abchs}, which as remarked above was proved in \cite{abchs}. The proof  (occupying most of this section) differs from the proof given there - it is done inductively, together with (2). 

\begin{proof}[Proof of Theorem~\ref{thm:rb.a.a.path}]
Note, first, that  (2) implies (1).   Assume (2) is true for a given $k$. If (1) fails,  namely $|\cp| > 2k$,  let $\tp$ be a subset of $ \cp$ of size $2k+1$, let $P\in\tp$ and let  $\cb=\tp \setminus \{P\}$. By (2), $\cb$ is a $k$-badge, and by  Corollary~\ref{rmk:extraedge} $\cp \supseteq \tp = \cb\cup \{P\}$ has a rainbow $F$-AAP. 

So, it suffices to prove (2). We do this  by induction on $k$. The base case $k=0$ is vacuously true. For the inductive step,  assume (2) is true for $k-1$. 
Assume, for negation, that  $|\cp|=2k$  and   there is no $\cp$-rainbow $F$-AAP.
Let $U= \bigcup F$.

An outline of the proof is as follows.
\begin{enumerate}[(i)]
	\item We find some triangle $C = \{va, ab, bv\}$, where $ab \in F$ and $v \not \in U=\bigcup F$.

	\item Contracting $C$ to a vertex $w$ and applying the induction hypothesis results in a badge $\cb'$, containing an origamistrip $OS'$,  having $w$ as one of its endpoints. 

	\item Open up $OS'$ to an origamistrip $OS$ containing $ab$, such that $ab$ is the first $F$-edge on both $P^A(OS)$ and $P^B(OS)$. 

	\item Remove $OS$, namely its $m+1$ $F$-edges and $2m+2$ $F$-AAPs. This leaves the matching $F \setminus M(OS)$ of size $k-m-1$, along with $2k-2m-2$ $(F\setminus M(OS))$-AAPs, with no rainbow $F \setminus M(OS)$-AAP. By the induction hypothesis, these form a $(k-m-1)$-badge $\cb$ with skeleton $F \setminus M(OS)$.

	\item Add back the origamistrip $OS$ to $\cb$. Since the skeleton $M(OS)$ is disjoint from $F \setminus M(OS)$,  $\cp$ is  a badge,  proving (2).
\end{enumerate}

We now proceed with the proof in detail.

\begin{claim}\label{claim: odd cycle}
There exists an odd $\cp$-rainbow  $F$-alternating cycle, containing a vertex that does not belong to $U$.
\end{claim}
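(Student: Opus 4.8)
The plan is an Edmonds‑flavoured one: try to manufacture a $\cp$‑rainbow $F$‑AAP by growing a rainbow $F$‑alternating path out of a vertex outside $U$. Since the hypothesis forbids such an AAP, the growth must get stuck, and the obstruction is to be exactly a rainbow ``blossom'' — an odd $F$‑alternating cycle whose unique uncovered vertex (its apex) lies outside $U$.

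\emph{Setting up and analysing a maximal path.} Since $k\ge 1$ and $|\cp|=2k$, fix $P\in\cp$ and an endpoint $v_0\notin U$ of $P$; the lone non‑$F$ edge of $P$ at $v_0$ is a $\cp$‑rainbow $F$‑alternating path starting outside $U$, so we may choose such a path $W=v_0v_1\cdots v_\ell$ of \emph{maximum} length. Its first edge $v_0v_1$ is a non‑$F$ edge, so $W$ alternates $\mathrm N,F,\mathrm N,F,\dots$ and its non‑$F$ edges $v_0v_1,v_2v_3,\dots$ carry distinct colours $c_1,\dots,c_{\ell/2}\in\cp$; in particular $\ell/2\le|F|=k$, so at least $k$ colours of $\cp$ do not occur on $W$. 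I would then pin down $v_\ell$: if $\ell$ were odd, the last edge $v_{\ell-1}v_\ell$ is a non‑$F$ edge, and either $v_\ell\notin U$ — making $W$ itself a $\cp$‑rainbow $F$‑AAP, contrary to hypothesis — or $v_\ell\in U$, in which case (using that $F$ is a matching and $v_\ell$ is an endpoint of $W$) the $F$‑partner of $v_\ell$ does not lie on $W$, so $W$ extends along it, contrary to maximality. Hence $\ell$ is even, so $v_{\ell-1}v_\ell\in F$, $v_\ell\in U$ and $\ell\ge 2$. I would also record the elementary fact that every interior vertex of an $F$‑AAP lies in $U$; in particular the free vertex $v_0$ occurs in each member of $\cp$ only as an endpoint, hence in at most one of its edges.

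\emph{Folding $W$ back — the crux.} The apex is the only vertex of an odd $F$‑alternating cycle not forced into $U$, and $v_0$ is the only such vertex on $W$, so I would seek a non‑$F$ edge $f=v_0v_i$ of $G$ with $i$ even, $2\le i\le\ell$, lying in some colour $A\in\cp$ different from $c_1,\dots,c_{i/2}$. Then $v_0v_1\cdots v_iv_0$ is an odd $F$‑alternating cycle — of length $i+1$, with its two consecutive non‑$F$ edges $v_iv_0,v_0v_1$ meeting at the apex $v_0\notin U$ — and it is $\cp$‑rainbow, coloured by $c_1,\dots,c_{i/2}$ along $W$ and by $A$ on $f$. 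Producing this ``fresh'' chord at $v_0$ is the step I expect to be the main obstacle. The resources to exploit are: maximality of $W$, which forces every non‑$F$ edge leaving $v_\ell$ in a colour of $\cp\setminus\{c_1,\dots,c_{\ell/2}\}$ to re‑enter $W$; the reservoir of at least $k$ colours of $\cp$ unused on $W$; and the fact that each such unused AAP meets the free vertex $v_0$ (if at all) only at one of its own endpoints. The line I would pursue is to arrange, after possibly re‑choosing $v_0$ and $W$, that $v_0$ is an endpoint of an unused AAP $A$: its first edge $v_0a_1$ is then a non‑$F$ edge in a colour absent from $W$, and one argues that $a_1$ must land on $W$ at an even index — otherwise $W$, or a rainbow $F$‑alternating path of the same length, could be prolonged, contradicting the choice of $W$ — which delivers the chord and hence the cycle. (A subsequent step, not part of this claim, would trim the resulting odd cycle down to the triangle $C$ of item~(i) of the outline.)
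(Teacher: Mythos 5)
There is a genuine gap, and it sits exactly at the step you yourself flag as the crux: producing the fresh chord $v_0v_i$ with $i$ even. First, you never justify that you can arrange for the apex $v_0$ to be an endpoint of a colour $A\in\cp$ unused on the maximum-length path $W$. There are indeed at least $k$ colours missing from $W$, but their (free) endpoints need not include $v_0$; and ``re-choosing $v_0$ and $W$'' is circular, since restarting from an endpoint of $A$ changes $W$ and the new maximum path may well consume $A$. Second, even granting such an $A$ with first edge $v_0a_1$, maximality of $W$ does not force $a_1$ to land on $W$, let alone at an even index. Because the new edge attaches at the \emph{starting} vertex $v_0$, the candidate prolongations fail: if $a_1\in U\setminus V(W)$ with $F$-partner $b_1$ (necessarily off $W$, as all of $v_1,\dots,v_\ell$ are matched inside $W$), the walk $b_1a_1v_0v_1\cdots$ is not alternating (two consecutive non-$F$ edges at $v_0$) and the path $v_0a_1b_1$ has length $2$, so neither contradicts the choice of $W$; and if $a_1=v_i$ with $i$ odd, the derived path $v_0v_iv_{i+1}\cdots v_\ell$ has length $\ell-i+1\le\ell$, again no contradiction, while the cycle $v_0v_1\cdots v_iv_0$ is even and useless. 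So the two bad cases are not excluded, and the odd rainbow cycle does not materialize.

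It is also worth noting where your route diverges from the paper's and why that matters: Claim~\ref{claim: odd cycle} is proved inside the induction on $k$ of Theorem~\ref{thm:rb.a.a.path}, and the paper's argument leans on that induction hypothesis rather than on a local maximality argument. It looks at the first edge $va$ of one path $Q$, immediately gets the triangle $vab$ if $vb$ lies on some path, and otherwise deletes the vertex $a$, truncates the paths through $ab$, and applies part (1) for $k-1$ to the remaining $2k-1$ paths to obtain a rainbow $F'$-AAP $R$; the absence of a $\cp$-rainbow $F$-AAP then forces $R$ to pass through both $b$ and $v$, and $R\cup\{ba,av\}$ is the desired odd cycle (with $Q$ colouring $av$). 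Your preliminary analysis of the maximal path $W$ (parity of $\ell$, $v_\ell\in U$, the reservoir of unused colours) is correct, but without the induction hypothesis, or some substitute such as Theorem~\ref{t.abchs} applied after a suitable deletion or contraction, the blossom-growing sketch does not close.
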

\begin{proof}
Choose  $Q\in \cp$, and let $va$ be its first edge, where $v \not \in U$ and $a \in U$. 
Let $b$ be the vertex matched to $a$ in $F$. 
If the pair $vb$ lies on a path belonging to $\cp$, then the triangle $vab$ is an odd rainbow cycle as desired. 
So, we may assume that $vb$ does not lie on a $\cp$ path. Let $F'=F\setminus \{ab\}$. 

Remove the vertex $a$.
Replace every $P \in \cp\setminus \{Q\}$ by an $F'$-AAP $P'$, as follows. 
\begin{itemize}
    \item If $P$ does not contain  $ab$, let $P' = P$. 
    \item If $P$ is a path of the form $P=RabS$, let $P'=bS$.
\end{itemize}

Let $\cp'=\{P' \mid P \in \cp \setminus \{Q\}\}$. 
The matching $F'$ is of size $k-1$, and $|\cp'|=2k-1 > 2(k-1)$. 
By the induction hypothesis for (1), there exists a $\cp'$-rainbow $F'$-AAP $R$. 
$R$ contains $b$, otherwise it is  a $\cp$-rainbow $F$-AAP, contrary to our assumption.
Note that $v \in R$, otherwise $R \cup \{ba, av\}$ is a $\cp$-rainbow $F$-AAP.
Hence $R \cup \{ba, av\}$ is the desired odd cycle.
\end{proof}

\begin{claim}\label{claim: no five cycle}
There is no odd $\cp$-rainbow $F$-alternating cycle of length larger than $3$.
\end{claim}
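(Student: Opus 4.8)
There is no odd $\cp$-rainbow $F$-alternating cycle of length larger than $3$.

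The plan is to argue by contradiction: suppose $\cp$ carries a $\cp$-rainbow odd $F$-alternating cycle $C = v_0 v_1 \cdots v_{2\ell} v_0$ of length $2\ell+1 \ge 5$, with apex $v_0$ — so $v_0 v_1, v_{2\ell}v_0 \notin F$, $v_{2i-1}v_{2i}\in F$ for $1\le i\le \ell$, and the $\ell+1$ non-$F$ edges of $C$ get pairwise distinct colors — and produce from it a $\cp$-rainbow $F$-AAP, contradicting the standing assumption. First I would record what being rainbow forces. Each interior vertex $v_1,\ldots,v_{2\ell}$ of $C$ lies in $U = \bigcup F$ (it is matched across an $F$-edge of $C$), so it is an interior vertex of any $\cp$-path that colors a $C$-edge at it, and that path must then also contain the $F$-edge of $F$ incident to that vertex. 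In particular, if $P_0\in\cp$ colors $v_0v_1$ then $P_0$ contains the subpath $v_0v_1v_2$; moreover the only $\cp$-paths meeting $v_1$, or meeting $v_2$, are those containing the edge $v_1v_2$.

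Next I would run a reduction in the spirit of Claim~\ref{claim: odd cycle}. Delete the vertex $v_1$, set $F'=F\setminus\{v_1v_2\}$, discard the path $P_0$, and replace each remaining $P\in\cp\setminus\{P_0\}$ by itself if $v_1\notin P$, and otherwise by the component of $P-v_1$ containing $v_2$ (which, by the remark above, starts at $v_2$ with a non-$F$ edge, hence is an $F'$-AAP). This gives a family $\cp'$ of $2k-1$ many $F'$-AAPs with $|F'|=k-1$, so $|\cp'|>2(k-1)$, and the induction hypothesis for part (1) yields a $\cp'$-rainbow $F'$-AAP $R$. If $v_2\notin R$, then $R$ avoids both $v_1$ and $v_2$, is therefore an $F$-AAP, and lifts edge-by-edge — the map $\cp'\to\cp\setminus\{P_0\}$ undoing the truncation is injective and sends each path to one containing it — to a $\cp$-rainbow $F$-AAP, a contradiction. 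Hence $v_2\in R$; since $v_2$ is free in $F'$ but appears in $\cp'$ only as an endpoint of a truncated path, $v_2$ must be an endpoint of $R$.

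It remains to convert $R$ into a $\cp$-rainbow $F$-AAP, and this is the heart of the matter. Prepending to $R$ the $F$-edge $v_1v_2$ and then the non-$F$ edge $v_0v_1$ (colored by the discarded $P_0$) produces an $F$-alternating, $\cp$-rainbow walk $Z$ from $v_0$ to the other endpoint $t$ of $R$ — already the required AAP when $v_0\notin U$ and $v_0\notin V(R)$. The two things I expect to fight with are: (a) $v_0\in U$, where $Z$ is not yet augmenting at the $v_0$ end and must be re-closed using the second apex edge $v_{2\ell}v_0$ together with its color $P(v_{2\ell}v_0)$ — which, as above, contains $v_{2\ell-1}v_{2\ell}v_0$ — rather than through the uncontrolled $F$-partner of $v_0$; and (b) $R$ possibly revisiting vertices of $C$, which I would tame by choosing $R$ to meet $V(C)$ minimally and, if it still does, re-routing $Z$ along the arc of $C$ between $v_0$ and the first revisited vertex. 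The genuine difficulty is bookkeeping the colors through all this: every non-$F$ edge of the final path must be colored by the distinct path attached to a distinct $C$-edge (or by $P_0$), never asking any single $\cp$-path to color two of its interior edges. The structural facts about $v_1,v_2$ are exactly what make the splice at $v_1v_2$ behave, and a careful case split on where $R$ first touches $C$ and on whether $v_0\in U$ is what I expect to finish the argument.
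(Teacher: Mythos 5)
Your reduction (delete $v_1$, discard the single color $P_0$ of $v_0v_1$, truncate at the edge $v_1v_2$, apply the inductive part (1) with $2k-1>2(k-1)$) is fine as far as it goes, and the structural observations about $v_1,v_2$ are correct. But the step you yourself flag as ``the heart of the matter'' is a genuine gap, not a routine verification, and as sketched it cannot be completed in this form. Because you set aside only one color, the rainbow $F'$-AAP $R$ produced by induction is free to use the colors of all the other cycle edges and to revisit vertices of $C$ --- in particular the apex $v_0$ --- and then prepending $v_1v_2,\,v_0v_1$ does not even give a path, while your proposed repairs (choosing $R$ meeting $V(C)$ minimally, re-routing along an arc of $C$) run exactly into the color conflict you name: the arc's non-$F$ edges need colors that $R$ may already have consumed, and nothing in the induction lets you forbid this. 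The clearest sign that something essential is missing is that your argument never uses the hypothesis $|C|>3$: if it worked as written it would equally rule out rainbow alternating triangles, which do exist in configurations with no rainbow $F$-AAP (in a $1$-origamistrip with paths $xu_1v_1y$ and $xv_1u_1y$, the triangle $\{xu_1,u_1v_1,v_1x\}$ is rainbow, and there your construction fails precisely through case (b): the path $R$ returned by induction ends back at the apex $x$). Also, in case (a) ($v_0\in U$) the suggestion to ``re-close using the second apex edge'' does not address the real problem, namely that a path arriving at a matched $v_0$ must continue through its $F$-partner, over which you have no control.

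Compare this with the paper's proof of Claim~\ref{claim: no five cycle}: there the whole cycle $C$ (length $2q+1$, $q>1$) is contracted to one vertex and \emph{all} $q+1$ colors representing edges of $C$ are set aside, leaving $2k-q-1$ paths against a matching of size $k-q$; the inequality $2k-q-1>2(k-q)$ is exactly where $q>1$ is used, and the extension of the resulting rainbow AAP back through the cycle only needs the set-aside colors, so it is rainbow for free and no revisiting or bookkeeping issues arise. That global set-aside is the idea your sketch is missing; with only one vertex deleted and one color removed, the color-disjointness that makes the splice legitimate is simply not available.
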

\begin{proof}
Suppose  that $C$ is a $\cp$-rainbow $F$-alternating cycle with $|C|=2q+1$ for $q>1$. 
Contract  $C$ to one vertex, and remove from $\cp$ the set $\cp_C$ of the paths in $\cp$ represented by edges of $C$.
This results in a matching $F'$ of size $k':=k-q$ and a family $\cp'$ of $2k-q-1$ walks, each of which contains an $F'$-alternating path.
Since $q>1$, we have $2k-q-1 > 2k'$, so by the induction hypothesis on (1) there exists a $\cp'$-rainbow $F'$-AAP $R'$. 
Note that every vertex in $C$ is reachable from $v$ by an even (possibly empty) $\cp_C$-rainbow $F$-alternating path.
Therefore $R'$ can be extended by adding edges from $C$ to a $\cp$-rainbow $F$-AAP, a contradiction. 
\end{proof}

 Combining the above claims gives a rainbow $F$-alternating triangle $C$, namely $C=\{va, ab, bv\}$ for some vertex $v \not\in U$ and edge $ab \in F$.

As before, contract $C$ to a point $w$, and let $F'=F \setminus \{ab\}$. 
For every $P\in \cp$ define a path $P'$ as follows: 

\begin{itemize}
    \item If $P$ does not pass through $a,b$ or $v$, let $P'=P$.
    \item If  $P=P_1 v$ and it does not pass through $a$ or $b$, let $P' = P_1 w$.
   \item If $P=P_1abP_2$ or $P_1baP_2$, where $v$ possibly appears in $P_2$ but does not appear in $P_1$, let $P'=P_1w$.
\end{itemize}
Let $\cp'$ be the resulting family of $2k$ $F'$-AAPs.

\begin{claim}\label{claim:contract}
Let $X$ be a path containing $av$ and let $Y$ be a path containing $bv$.
Then 
$\cp'\setminus\{X',Y'\}$ is a $(k-1)$-badge.
\end{claim}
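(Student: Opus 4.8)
The plan is to apply the induction hypothesis to $\cp'$ after removing the two paths $X'$ and $Y'$. First I observe that $\cp'\setminus\{X',Y'\}$ is a family of $2k-2 = 2(k-1)$ paths that are $F'$-AAPs, where $F'$ has size $k-1$. So to invoke the induction hypothesis for part (2), the one thing I must verify is that $\cp'\setminus\{X',Y'\}$ has \emph{no} rainbow $F'$-AAP. Suppose for contradiction that $R'$ is such a path. The strategy is to lift $R'$ back to a $\cp$-rainbow $F$-AAP, contradicting our standing assumption. The key point is that $w$ is the only vertex of $\cp'$ that arose from the contraction, and $w$ is ``split apart'' back into the triangle $C=\{va,ab,bv\}$; crucially, the three vertices $v,a,b$ are mutually reachable from one another by even $F$-alternating segments that use \emph{only} edges of the triangle together with colors coming from $X$ and $Y$.

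The main case analysis runs as follows. If $R'$ does not pass through $w$, then $R'$ uses only paths disjoint from $\{a,b,v\}$, so it is already a $\cp$-rainbow $F$-AAP — contradiction. So $R'$ passes through $w$. Because $w$ is matched to nothing in $F'$ (we deleted $ab$), $w$ must be an \emph{endpoint} of $R'$, say $R' = R_1 w$ with $R_1$ nonempty (if $R'$ were the single vertex $w$ it would not be an AAP). The last edge of $R'$ is some $zw$ where $z \notin \bigcup F'$ is not the case — actually the last non-$F'$ edge arrives at $w$, and in $\cp$ this edge originally went to one of $v,a,b$. I split into subcases according to which of $v,a,b$ that edge reached and which original path carried it: since the edges into $w$ in $\cp'$ come exactly from paths $P$ with $P' = P_1 w$, these are paths through $v$ (not $a,b$), or paths through $ab$. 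In each subcase I recover $R_1$ together with the appropriate short $F$-alternating detour inside $C$ — of length $0$, $1$, or $2$ — ending at a vertex of $C$ outside $\bigcup F$, namely $v$, so that the lifted path is augmenting. The colors for this detour are supplied by whichever of $A(OS_\cdot)$, $B(OS_\cdot)$, or the paths $X, Y$ are not yet used by $R_1$; here I use that $X$ contains $av$ and $Y$ contains $bv$, so at least one of them is available to color the final triangle edge, exactly as in the proof of Lemma~\ref{lem: add an edge}.

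The main obstacle I anticipate is bookkeeping the colors: I must make sure that, after lifting, no color is used twice — in particular that the path $P$ whose contracted image $P'$ represents the last edge of $R'$ is not the same as $X$ or $Y$, and that the detour edges inside $C$ can be colored by $\{X, Y\}$ or by paths of $\cp$ not appearing in $R'$. Since $R'$ is rainbow in $\cp'\setminus\{X',Y'\}$, it uses at most $2k-2$ colors and omits $X',Y'$, leaving those two colors free for the at-most-two triangle edges; one checks that this always suffices. A secondary subtlety is verifying that the lifted object is a genuine \emph{simple path} and not merely a walk — i.e.\ that $R_1$ does not already pass through $v$, $a$, or $b$; but this follows because in $\cp'$ the vertices $a,b,v$ have been replaced by $w$, so $R_1$ (being a subpath of $R'$ avoiding $w$ except at its end) avoids all of $v,a,b$. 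Assembling these observations yields the contradiction, so $\cp'\setminus\{X',Y'\}$ has no rainbow $F'$-AAP, and the induction hypothesis gives that it is a $(k-1)$-badge.
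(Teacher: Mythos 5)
Your proposal is correct and follows essentially the same route as the paper: assume a rainbow $F'$-AAP $R'$ exists for $\cp'\setminus\{X',Y'\}$, note that $w$ (being unmatched in $F'$) can only be an endpoint, lift $R'$ back to a path ending at one of $v,a,b$, and complete it through the triangle using the reserved colors $X$ (for $av$) or $Y$ (for $bv$) to contradict the absence of a $\cp$-rainbow $F$-AAP, after which the induction hypothesis for part (2) applies to the $2(k-1)$ paths over $F'$. The color bookkeeping and simplicity checks you flag are exactly the points the paper relies on (implicitly), and your handling of them is sound.
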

\begin{proof}
Suppose that there exists a $(\cp'\setminus\{X',Y'\})$-rainbow $F'$-AAP $R'$.
If $R'$ does not contain $w$, then it is also a $\cp$-rainbow $F$-AAP.
Otherwise, $R'$ arises from an odd length $\cp$-rainbow $F$-AAP $R$, whose final vertex is in $C$.
\begin{itemize}
    \item If $a$ is an endpoint of $R$, then $R \cup \{ab,bv\}$ is a $\cp$-rainbow $F$-AAP, since $bv \in Y$.
    \item If $b$ is an endpoint of $R$, then $R \cup \{ba,av\}$ is a $\cp$-rainbow $F$-AAP, since $av \in X$.
    \item If $R$ ends in $v$, then $R$ itself is a $\cp$-rainbow $F$-AAP.
\end{itemize}

So, we may assume that there is no such $R'$. 
The claim then follows from the induction hypothesis on (2).
\end{proof}

We know there exist two distinct paths $T_a$ that contains $av$ and $T_b$ that contains $bv$ in $\mathcal{P}$ because $C=\{va,ab,bv\}$ is a rainbow $F$-alternating triangle. 
Since there is no $\cp$-rainbow $F$-AAP, by Claim~\ref{claim:contract}, $\cp' \setminus \{T'_a,T'_b\}$ forms a $(k-1)$-badge, to be named $\cb'$.

\smallskip

{\bf Case I.} There is no origamistrip in $\cb'$ ending at $w$.

{\bf Case Ia. } One of $T'_a, ~T'_b$ is a path of length $>1$. This path then contains an edge not incident to $w$.
 Corollary~\ref{rmk:extraedge} yields then a $\cp'$-rainbow $F'$-AAP $R$.
Since in the presently considered case $\cb'$ has no edge incident to $w$, $R$  avoids $w$, so it is also a $\cp$-rainbow $F$-AAP.

{\bf Case Ib.}  $T'_a=wz$ and $T'_b=wt$ for some $z,t \not\in U$, meaning that $T_a=vabz$ and $T_b=vbat$.
If $z \neq t$, then $zbat$ is a $\cp$-rainbow $F$-AAP.
If $z = t$, then $\{T_a,T_b\}$ form a 1-origamistrip, which added to $\cb'$ makes $\cp$ a $k$-badge.

\smallskip

{\bf Case II.} There exists  an $m$-origamistrip $OS'$ in $\cb'$ containing $w$ as an endpoint.

Let $E(OS') \cap F = \{u_1v_1,\ldots,u_m v_m\}$ and let $z$ be the other endpoint of $OS'$. 
Then there are $2m$ paths of $\cp'\setminus \{T'_a, T'_b\}$ contained in $OS'$, say 
\begin{itemize}
    \item $Q'_1 = \ldots = Q'_m =P^A(OS')=z u_1v_1 u_2v_2 \ldots u_mv_m w $, and
    \item  $Q'_{m+1} = \cdots = Q'_{2m} = P^B(OS')=z v_1u_1 v_2u_2 \ldots v_mu_m w $.
\end{itemize}
Recall that each $Q_i'$ arose from a   path $Q_i \in \cp$.
For $i \le m$, $Q_i$  contains exactly one of $v_m a$, $v_m b$ and $v_m v$, and for $m+1\le i \le 2m$, $Q_i$  contains exactly one of $u_m a$, $u_m b$ and $u_m v$.

{\bf Case IIa.} $ab \not \in Q_i$ for all $i \le 2m$. 

In this case,  $Q_i'$ is equal to $Q_i$ for all $i$, with  $v$ replaced by $w$.
Then, replacing $w$ by $v$ in  $OS'$ results in an origamistrip $OS$ in $\cp$.
By the induction hypothesis, $\cp \setminus OS$ is a $(k-m)$-badge, hence $\cp$ is a $k$-badge.

{\bf Case IIb.} $ab \in Q_i$ for some $i$.  

Without loss of generality, $i \leq m$, and $a$ is closer than $b$ to $z$ on $Q_i$. So $Q_i$ is of the form $z u_1 v_1 \dots u_m v_m ab R$ for some path $R$.

To deal with this sub-case, the following will be used repeatedly.
\begin{claim}\label{claim:step1+2}
Let $r$ be  one element of  $\{a,b\}$, and $s$ the other element.    
\begin{enumerate}[(i)]
    \item If $Q_j$ contains $v_mr$ for some $j \leq m$, then 
    \[T_s = Q_j = zu_1 v_1 \ldots u_m v_m rsv.\]

\item If $Q_j$ contains $u_mr$ for some $j \geq m+1$, then \[T_s = Q_j = zv_1 u_1 \ldots v_m u_m rsv.\]
\end{enumerate}
\end{claim}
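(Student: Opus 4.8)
\textbf{Proof plan for Claim~\ref{claim:step1+2}.}
The plan is to prove part (i) only, as part (ii) is symmetric under the interchange of the roles of $A$ and $B$ (equivalently, of $u$ and $v$ throughout $OS'$). So suppose $Q_j$ contains $v_m r$ for some $j \le m$, where $r \in \{a,b\}$ and $s$ is the other element. By the description of $\cp'$ given just before Case IIa, $Q_j' = P^A(OS') = z u_1 v_1 u_2 v_2 \ldots u_m v_m w$, so the edge of $Q_j$ incident to $v_m$ on the far side from $u_m$ must be one of $v_m a$, $v_m b$, $v_m v$; the hypothesis pins it down as $v_m r$. Thus $Q_j$ is of the form $z u_1 v_1 \ldots u_m v_m r \dots$, continuing past $v_m$ through $r$. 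The first thing I would check is that $Q_j$ cannot have $v_m r$ as its \emph{last} edge: since $r \in U = \bigcup F$, an $F$-AAP cannot end at $r$ (it must end at a vertex outside $U$), so $Q_j$ continues beyond $r$.

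Next I would determine which edge of $Q_j$ follows $r$. Since $Q_j$ is $F$-alternating and the edge $u_m v_m \in F$ was already used entering $v_m$, the edge $v_m r$ is a non-$F$ edge, so the edge leaving $r$ must be an $F$-edge; as $r \in \{a,b\}$ and $ab \in F$ is the unique $F$-edge at $r$, this forces the next edge to be $rs$. So $Q_j = z u_1 v_1 \ldots u_m v_m r s \dots$, and now the edge leaving $s$ must be a non-$F$ edge. Here is where the structure of $\cp'$ and the contraction enter: when we contracted $C=\{va,ab,bv\}$ to $w$, the path $Q_j$ passed through $ab$ (indeed through $rs$), so $Q_j$ falls into the third bullet of the definition of $P'$; writing $Q_j = Q_j^{(1)} \, r s \, Q_j^{(2)}$ with $v$ possibly in $Q_j^{(2)}$ but not in $Q_j^{(1)} = z u_1 v_1 \ldots u_m v_m$, we get $Q_j' = Q_j^{(1)} w = z u_1 v_1 \ldots u_m v_m w$, consistent with $Q_j' = P^A(OS')$. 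The point to extract is that $v$ \emph{does} appear in $Q_j$ (otherwise $Q_j' = Q_j$ would not contain $w$, contradicting $Q_j' = P^A(OS') \ni w$), and $v$ appears in $Q_j^{(2)}$, i.e.\ after $s$.

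Now I would argue that the edge of $Q_j$ leaving $s$ is exactly $sv$, so that $Q_j$ terminates at $v$ right there. Suppose not. Then the non-$F$ edge leaving $s$ goes to some vertex $p \ne v$, and $Q_j$ continues $\ldots r s p \ldots$, eventually reaching $v$ further along. But then consider the path $Q_j$ with its tail from $s$ onward deleted and replaced by the single edge $sv$: concretely, take $Q_j^{(1)} r s v = z u_1 v_1 \ldots u_m v_m r s v$. This is a valid $F$-alternating path (it alternates correctly: $u_i v_i \in F$, then $v_i u_{i+1} \notin F$, $\ldots$, $u_m v_m \in F$, $v_m r \notin F$, $rs \in F$, $sv \notin F$) and it is augmenting since $z \notin U$ and $v \notin U$. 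It remains to see it is $\cp$-rainbow: its non-$F$ edges are the $A(OS')$-edges $z u_1, v_1 u_2, \ldots, v_{m-1} u_m$, together with $v_m r$ and $sv$. The first block can be coloured by the $m$ copies of $P^A(OS')$ coming from $Q_1, \ldots, Q_m$; the edge $v_m r$ and $sv$ both lie on $Q_j$, but $Q_j$ can colour only one of them — so this needs a little care, and I expect this to be the main obstacle. The resolution: at most one of $\{v_m r, sv\}$ is already needed among the $A(OS')$-colours, and if $sv$ is on $Q_j$ only, we can instead recolour by using $Q_j$ for $sv$ and noting some other path of $\cp$ supplies $v_m r$ — more cleanly, one observes that $v_m r \in Q_j$ and $Q_j$ is one of the $m$ copies available for the $A$-block, so we are short exactly one colour, which is supplied because the ``extra'' edge $sv$ is not among the original $2m$ origamistrip edges and hence frees up... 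Actually the clean way is: the path $z u_1 v_1 \ldots u_m v_m r s v$ uses non-$F$ edges $\{zu_1,\dots,v_{m-1}u_m\} \cup \{v_m r\} \cup \{sv\}$, a set of size $m+2$; colour $v_m r$ and $\{zu_1,\dots,v_{m-1}u_m\}$ (total $m+1$ edges) using $Q_1,\dots,Q_m$ — wait, that is $m$ paths for $m+1$ edges. Hence the genuinely careful bookkeeping is unavoidable here, and I would spell it out: the correct statement is that all of $\{zu_1, v_1u_2, \ldots, v_{m-1}u_m, v_mr\}$ are consecutive non-$F$ edges of $Q_j$'s siblings $Q_1, \dots, Q_m$ — no: $v_m r \in Q_j$ only. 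So I would instead use the edge $v_m r$ coloured by $Q_j$, the edges $zu_1, \ldots, v_{m-2}u_{m-1}$ (that is $m-1$ edges) coloured by $m-1$ of the remaining copies $Q_1, \ldots, Q_{m-1}$, and then I still need $v_{m-1}u_m$ and $sv$, two more edges with $Q_m$ and ... and here the resolution must be that $sv$ can be coloured by $Y = T_b$ or $X = T_a$ (the paths containing $bv$ resp.\ $av$), since $s \in \{a,b\}$ — and indeed one of $T_a, T_b$ contains $sv$. That is the key: $sv$ is coloured by whichever of $T_a, T_b$ passes through it, a colour not used so far, giving the contradiction with ``no $\cp$-rainbow $F$-AAP''. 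Therefore the edge leaving $s$ in $Q_j$ is $sv$, so $Q_j = z u_1 v_1 \ldots u_m v_m r s v$, and since $Q_j$ is then a path ending at $v$ through the edge $sv$, it is exactly the path of $\cp$ containing $sv$, i.e.\ $T_s$, completing the proof.
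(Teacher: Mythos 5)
Your proposal has a genuine gap at exactly the point you flagged and then waved past. Your contradiction path is the long path $zu_1v_1\ldots u_mv_m\,r\,s\,v$, and it cannot be shown $\cp$-rainbow: it has $m+2$ non-$F$ edges ($zu_1,v_1u_2,\ldots,v_{m-1}u_m$, then $v_mr$ and $sv$), while the only colours guaranteed to contain them are $Q_1,\ldots,Q_m$ (for the $A(OS')$-edges and for $v_mr$) together with $T_s$ (for $sv$) --- that is $m+1$ colours for $m+2$ edges, and assigning $sv$ to $T_s$ does not repair the deficit, because $zu_1,\ldots,v_{m-1}u_m,v_mr$ are still $m+1$ edges whose only guaranteed colours are the $m$ paths $Q_1,\ldots,Q_m$. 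No bookkeeping fixes this; the paper's argument for the first half of the claim runs in the opposite direction. Assuming $Q_j$ does not end with $sv$, it contains an edge $sy$ with $y\notin V(OS')$, and one builds the \emph{short} path $v\,r\,s\,y$, colouring $vr$ by whichever of $T_a,T_b$ contains it and $sy$ by $Q_j$ (two distinct members of $\cp$, since $Q_j\notin\{T_a,T_b\}$); if $y\notin U$ this is already a $\cp$-rainbow $F$-AAP, and if $y\in U$ then $y$ is an interior vertex of a \emph{different} origamistrip of $\cb'$, and Observation~\ref{qaqb} lets one continue from $y$ to a suitable endpoint of that origamistrip using only fresh colours. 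Starting from $v$ rather than from $z$ is what makes the counting work, since the $A$-side edges of $OS'$ are never used. (A smaller slip: $w\in Q_j'$ does not imply $v\in Q_j$ --- the truncation rule produces $P_1w$ for any path through $ab$, whether or not it meets $v$ --- though this is not load-bearing for your argument.)

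The second gap is your closing inference that $Q_j$, ending in $sv$, ``is exactly the path of $\cp$ containing $sv$, i.e.\ $T_s$.'' Nothing guarantees that only one member of $\cp$ contains $sv$; $T_a,T_b$ were fixed before Case II, and $Q_j$ is a different member of the family, so $T_s=Q_j$ (as edge sets) is a genuine assertion requiring proof. The paper obtains it from badge rigidity: once $Q_j$ is known to contain $sv$, Claim~\ref{claim:contract} applies with $Q_j$ in the role of the path through $sv$, so $\cp'\setminus\{T_r',Q_j'\}$ is also a $(k-1)$-badge; it shares all but one member with $\cb'=\cp'\setminus\{T_r',T_s'\}$, and two badges cannot differ in exactly one path, whence $T_s'=Q_j'$; since $T_s$ and $Q_j$ both begin $v\,s\,r$, they coincide. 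Your proposal contains no substitute for this step, so both halves of the claim remain unproved as written.
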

\begin{proof}
By symmetry, it suffices to show $(i)$.
Without loss of generality, we may assume $r = a$ and $s=b$.

We first claim that $Q_j$ ends with $bv$.
For otherwise,
$Q_j$ contains an edge $by$, where $y \not \in V(OS')$. 
If $y \not \in U$, then $vaby$ is a $\cp$-rainbow $F$-AAP. If $y \in U$, then it belongs to another origamistrip $OS$ in $\cb'$.
So it is possible to continue the path $vaby$ along one of the paths in $OS$ to get a $\cp$-rainbow $F$-AAP which ends at a vertex $v' \notin U$ with $v' \neq v$. To see the rainbow-ness, color $va$ by $T_a$, and  the edge  $by$ by $Q_j$.

Next we show $T_b = Q_j$.
By Claim~\ref{claim:contract}, $\left(\cb'\setminus \{Q'_j\}\right) \cup \{T'_b\} = \cp' \setminus \{T_a',Q'_j\}$ is also a $(k-1)$-badge, and since it shares all but one $F$-AAPs with $\cb'$, we must have  $T'_b=Q'_j$ (two badges cannot differ in one path, namely they cannot have symmetric difference of size $2$).
As the paths $T_b$ and $Q_j$ both begin with $vba$, they must be identical.
\end{proof}

\begin{claim}\label{claim:step3}
Let $\{r,s\}=\{a,b\}$.
\begin{enumerate}[(i)]
\item If $T_s = Q_j = zu_1 v_1 \ldots u_m v_m rsv$ for some $j \leq m$, then 
\[Q_{m+1} = \cdots = Q_{2m} = T_r = z v_1 u_1 \ldots v_m u_m srv.\]

\item If $T_s = Q_j = z v_1 u_1 \ldots v_m u_m rsv$ for some $j \geq m+1$, then 
\[Q_1 = \cdots = Q_m = T_r = z u_1 v_1 \ldots u_m v_m srv.\]
\end{enumerate}
\end{claim}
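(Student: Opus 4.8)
The plan is to prove (i), obtaining (ii) by interchanging the two matchings $A(OS')$, $B(OS')$ of $OS'$ (equivalently, the labels $u_i\leftrightarrow v_i$); and since the two ways of splitting $\{r,s\}=\{a,b\}$ differ only by relabelling which matching is ``$A$'', I will take $s=b$, $r=a$. So the hypothesis reads $T_b=Q_j=zu_1v_1\cdots u_mv_m abv$ for some $j\le m$, and I write $Q:=zv_1u_1\cdots v_mu_m bav$ for the intended common value. I claim the whole statement reduces to showing that \emph{every} $Q_\ell$ with $m+1\le\ell\le 2m$ contains the edge $u_mb$: granting this, Claim~\ref{claim:step1+2}(ii), applied with the pair $(b,a)$ and the index $\ell$, gives $Q_\ell=T_a=zv_1u_1\cdots v_mu_m bav=Q$ for every such $\ell$, which is at once $Q_{m+1}=\cdots=Q_{2m}=Q$ and $T_r=T_a=Q$ — exactly the conclusion. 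Recall that each such $Q_\ell$ contains exactly one of $u_ma$, $u_mb$, $u_mv$, so it remains only to exclude the edges $u_ma$ and $u_mv$.

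Excluding $u_ma$ is immediate: if some $Q_\ell$ ($\ell\ge m+1$) contained $u_ma$, then Claim~\ref{claim:step1+2}(ii) with the pair $(a,b)$ would force $T_b=Q_\ell=zv_1u_1\cdots v_mu_m abv$, contradicting the hypothesis $T_b=zu_1v_1\cdots u_mv_m abv$, since these two paths already disagree at their second vertex. So no $Q_\ell$ contains $u_ma$.

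The substantive case is $u_mv$. Suppose some $Q_\ell$ ($\ell\ge m+1$) contains $u_mv$; since $v\notin\bigcup F$ must be an endpoint of any $F$-alternating path through it, this forces $Q_\ell=zv_1u_1\cdots v_mu_m v$. I would derive a contradiction by producing a $\cp$-rainbow $F$-AAP, against the standing assumption of the proof of Theorem~\ref{thm:rb.a.a.path}. The resources are: the common ``$B$-spine'' $zv_1u_1\cdots v_mu_m$ carried by all of $Q_{m+1},\dots,Q_{2m}$; the extra edge $u_mv$ on this $Q_\ell$; the path $Q_j=T_b$, which supplies the three edges $v_ma$, $ab$, $bv$ around the contracted triangle; the designated path $T_a$ through $av$, which runs on to an endpoint outside $\bigcup F$; the remaining $A$-type paths $Q_1,\dots,Q_m$ (each, by Claim~\ref{claim:step1+2}(i), equal to $zu_1v_1\cdots u_mv_m abv$ or to $zu_1v_1\cdots u_mv_m v$); and, where needed, the rest of the badge $\cb'$ together with the induction hypothesis on part (1) applied to the $2k$-element family $\cp'$. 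One would start the augmenting path at $v$, leave through $u_mv$ (coloured by $Q_\ell$), run back down the $B$-spine colouring its $m$ non-$F$ edges with $m$ of the $B$-type copies, and then close it off — either terminating at $z$, or leaving the spine at its last vertex and continuing through the relevant origamistrip until an endpoint outside $\bigcup F$ is reached — using $T_a$ for $av$ and $T_b$ for one of $v_ma,ab,bv$ to complete and extend it, in the spirit of the ``continue along an origamistrip'' move in the proof of Claim~\ref{claim:step1+2}.

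I expect this last case to be the main obstacle. The two reductions above are pure bookkeeping on top of Claim~\ref{claim:step1+2}(ii), but excluding $u_mv$ genuinely requires constructing a rainbow augmenting path, and that is delicate for two reasons. First, the $A$-spine and $B$-spine of $OS'$ interleave, so there is no vertex at which one can pass from one to the other without repeating a vertex. Second, the $m$ $B$-type paths supply exactly $m$ colours — precisely enough to traverse the $m$ non-$F$ edges of the $B$-spine and no more — so the construction must actually extract an additional colour from the region near the contracted triangle (or from the rest of $\cb'$). Checking simultaneously that the walk produced is a simple path and that its non-$F$ edges receive pairwise distinct colours is the heart of the argument.
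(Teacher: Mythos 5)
Your reduction and the exclusion of $u_ma$ match the paper exactly: show each $Q_\ell$ ($\ell\ge m+1$) contains $u_mb$, then finish by Claim~\ref{claim:step1+2}(ii); and $u_ma$ is ruled out because Claim~\ref{claim:step1+2} would force $T_b=Q_\ell=zv_1u_1\ldots v_mu_mabv$, contradicting $T_b=Q_j=zu_1v_1\ldots u_mv_mabv$. But the case you flag as ``the main obstacle'' --- excluding $u_mv$ --- is a genuine gap, and the route you sketch for it is the wrong one. If $Q_\ell$ contains $u_mv$, the configuration does not yield a rainbow $F$-AAP by direct construction: as you yourself compute, the $B$-type (or $A$-type) copies supply exactly $m$ colours for the $m$ non-$F$ spine edges, and after spending one colour on the edge at the contracted triangle you are short by exactly one; the structure is tight, so ``closing it off'' at $z$ or through the rest of $\cb'$ cannot be completed as described. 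The object you actually get is a \emph{cycle}, not a path, and that is the point of the paper's argument: the five edges $\{u_mv,\,vb,\,ba,\,av_m,\,v_mu_m\}$ form an odd $F$-alternating cycle of length $5$ whose three non-$F$ edges are coloured by three \emph{distinct members} of $\cp$ --- $vb$ by $T_b$, $av_m$ by $Q_j$, $u_mv$ by $Q_\ell$ (note $T_b$ and $Q_j$ coincide as edge sets but are different colours, since the $Q_i$ were taken from $\cp'\setminus\{T_a',T_b'\}$). This rainbow $5$-cycle contradicts Claim~\ref{claim: no five cycle}, which was proved earlier precisely to absorb such configurations (by contracting the cycle and invoking the induction hypothesis). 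So the missing idea is not a cleverer augmenting-path construction but an appeal to the no-long-odd-rainbow-cycle claim; without it, your proof of part (i) (and hence of the whole claim) is incomplete.
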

\begin{proof}
By  symmetry it suffices to prove $(i)$, and we assume without loss of generality $r = a$ and $s=b$.

Let $\ell \ge m+1$. 
We claim that $Q_\ell$ contains the edge $u_m b$.
Recall that $Q'_\ell$ ends with  $u_mw$.
So if $Q_\ell$ does not contain $u_mb$, then it must contain either $u_m v$ or $u_m a$.
\begin{itemize}
    \item If $Q_\ell$ contains $u_mv$, then the $5$-cycle 
    $\{u_mv, vb, ba, av_m, v_mu_m\}$ is a rainbow odd $F$-alternating cycle (where $vb$ represents $T_b$, $av_m$ represents $Q_j$ and $u_mv$ represents $Q_\ell$). 
    This contradicts Claim~\ref{claim: no five cycle}.
    
    \item If $Q_\ell$ contains the edge $u_ma$, then by Claim~\ref{claim:step1+2}, we obtain
    \[T_b=Q_\ell=zv_1u_1v_2u_2\ldots v_mu_mabv,\]
    but this contradicts $T_b=Q_j$. 
\end{itemize}
    Thus $Q_\ell$ must contain $u_m b$ for all $\ell \geq m+1$.
    Applying Claim~\ref{claim:step1+2} to each $\ell$, it follows \[T_a = Q_{m+1} = \cdots = Q_{2m} = zv_1u_1 \dots v_m u_m b a v.\]
\end{proof}

Using Claim~\ref{claim:step1+2} and Claim~\ref{claim:step3}, we determine $Q_j$ for every $j$.
\begin{claim}\label{claim:uncontract}
If either $Q_i$ contains $v_m a$ for some $i \leq m$ or $Q_i$ contains $u_m b$ for some $i \geq m+1$ then
\begin{enumerate}[(a)]
\item $Q_1 = \cdots = Q_m=T_b=  zu_1v_1 \dots u_mv_m a b v$, and
    \item $Q_{m+1} = \cdots = Q_{2m} = T_a=zv_1u_1 \dots v_m u_m b a v$.
\end{enumerate}
That is, they form an $(m+1)$-origamistrip $OS$.
\end{claim}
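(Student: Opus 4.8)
We have, from Case IIb, the path $Q_i = zu_1v_1\ldots u_mv_m\,abR$, so in particular $Q_i$ already contains $ab$ and $R$ begins at a neighbour of $b$. By symmetry of the two halves of the origamistrip $OS'$, it suffices to treat the hypothesis ``$Q_i$ contains $v_ma$ for some $i\le m$'' (the other case being obtained by swapping the roles of the $A$-paths and $B$-paths, equivalently relabelling $u_j\leftrightarrow v_j$). So assume some $Q_i$, $i\le m$, contains $v_ma$.

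\smallskip

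\textbf{Step 1: identify $T_b$ and all the $A$-side paths.} Since $Q_i$ ($i\le m$) contains $v_ma$, Claim~\ref{claim:step1+2}(i) (with $r=a$, $s=b$) applies and gives immediately
\[
T_b = Q_i = zu_1v_1\ldots u_mv_m\,abv.
\]
Now I would argue that \emph{every} $Q_j$ with $j\le m$ equals this path. Indeed each such $Q_j$ contains exactly one of $v_ma$, $v_mb$, $v_mv$ (this was recorded just before Case IIa). It cannot contain $v_mv$: the triangle $\{v_mv, v a, a v_m\}$ would be a rainbow $F$-alternating triangle disjoint in colour from $T_b=Q_i$ only if $v_mv\in Q_j$, $va\in T_a$, $av_m\in$ (the path of length $\ge$ containing it) — more cleanly, $\{v_mv,vb,ba,av_m\}$ together with the $M$-edge $v_mu_m\ldots$ — so I would instead mimic the $5$-cycle argument of Claim~\ref{claim:step3}: if $Q_j$ contained $v_mv$ then $\{v_mv,\,vb,\,ba,\,av_m,\,v_mu_m\}$ is a length-$5$ rainbow $F$-alternating cycle ($vb$ via $T_b$, $av_m$ via... no — $av_m$ via $Q_i$ works only if $j\ne i$; and if $j=i$ we already know $Q_i$ contains $v_ma$, not $v_mv$). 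And it cannot contain $v_ma$ and be a \emph{different} path from $Q_i$, because Claim~\ref{claim:step1+2}(i) would force $T_b = Q_j = zu_1v_1\ldots u_mv_m abv = Q_i$ anyway. So the only remaining possibility besides $Q_j = Q_i$ is $v_mb\in Q_j$; I would rule this out by the odd-cycle argument: $\{v_mb,\,ba,\,av_m,\,v_mu_m,\dots\}$ — more precisely one continues $Q_j$ past $v_mb$: since $Q_j'$ ends $v_mw$, if $v_mb\in Q_j$ then appending $ba$ (coloured by $Q_i$, legitimate since $Q_i$ contains $ab$) and then $av$ gives $z u_1\ldots v_m b a v$, a rainbow $F$-AAP, contradiction. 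Hence $Q_1=\cdots=Q_m = T_b$, which is (a).

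\smallskip

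\textbf{Step 2: the $B$-side paths.} With $T_b=Q_j=zu_1v_1\ldots u_mv_m abv$ in hand for some $j\le m$, Claim~\ref{claim:step3}(i) (again $r=a$, $s=b$) applies verbatim and yields
\[
Q_{m+1}=\cdots=Q_{2m}=T_a = zv_1u_1\ldots v_mu_m\,bav,
\]
which is (b). Thus the $2m$ paths $Q_1,\dots,Q_{2m}$ together with $T_a,T_b$ are exactly the $2(m+1)$ AAPs of an $(m+1)$-origamistrip $OS$ on interior vertices $u_1,v_1,\ldots,u_m,v_m,a,b$ with endpoints $z$ and $v$: indeed the $A$-paths $P^A(OS) = zu_1v_1\ldots u_mv_m abv$ appear $m+1$ times (namely $Q_1=\cdots=Q_m$ and $T_b$) and the $B$-paths $P^B(OS)=zv_1u_1\ldots v_mu_m bav$ appear $m+1$ times ($Q_{m+1}=\cdots=Q_{2m}$ and $T_a$), matching the definition of an $(m+1)$-origamistrip with $M(OS) = M(OS')\cup\{ab\}$. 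That is the final sentence of the claim.

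\smallskip

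\textbf{Where the difficulty lies.} The content of the claim is essentially bookkeeping built on the two previous claims; the only genuinely new work is Step~1 — showing that \emph{every} $Q_j$ ($j\le m$), not just the one we started with, coincides with $T_b$, and in particular that none of them carries $v_mv$ or $v_mb$. The cleanest route is to observe that each alternative immediately produces either a short rainbow $F$-alternating cycle of length $5$ (contradicting Claim~\ref{claim: no five cycle}) or, by extending along $ba$ (a colour we now have available, via $Q_i$) and $av$, a genuine rainbow $F$-AAP — contradicting the standing assumption. Everything else is forced by Claims~\ref{claim:step1+2} and~\ref{claim:step3} and the ``two badges cannot differ in one path'' principle already used in the proof of Claim~\ref{claim:step1+2}.
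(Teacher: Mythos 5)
Your Step 2 and your treatment of the sub-cases ``$v_ma\in Q_j$'' and ``$v_mb\in Q_j$'' are sound (for the latter, note only that $ba\in F$ needs no colour, so the parenthetical ``coloured by $Q_i$'' is moot), and Step 2 is exactly the paper's use of Claim~\ref{claim:step3}(i). But the one piece you yourself identify as ``the only genuinely new work'' --- ruling out $v_mv\in Q_j$ for $j\le m$ --- is where your argument breaks. The edge set $\{v_mv,\,vb,\,ba,\,av_m,\,v_mu_m\}$ is not a cycle at all ($v_m$ has degree $3$ and $u_m$ degree $1$), and the honest closed walk available to you, $\{v_mv,\,vb,\,ba,\,av_m\}$, is a $4$-cycle, hence even, so Claim~\ref{claim: no five cycle} says nothing about it. The genuine $5$-cycle used in the proof of Claim~\ref{claim:step3} passes through $u_m$ and needs the edge $u_mv$ (or, in the mirrored case, the edge $u_mb$ supplied by the already-identified path $Q_{m+1}=T_a$); in your ordering, where you try to pin down all of $Q_1,\dots,Q_m$ before touching the $B$-side, that edge is not yet available, so the mimicry does not go through. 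As written, part (a) is therefore not proved.

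The gap is fixable in either of two ways. The paper's way: do your Step 2 first, and then apply Claim~\ref{claim:step3}(ii) with $Q_{m+1}=T_a=zv_1u_1\ldots v_mu_mbav$; its proof disposes of the $v_mv$ alternative by the genuine $5$-cycle $\{v_mv,\,va,\,ab,\,bu_m,\,u_mv_m\}$ (with $va$ coloured by $T_a$ and $bu_m$ by $Q_{m+1}$), so no new argument is needed --- which is why the paper's proof of this claim is purely two invocations of Claim~\ref{claim:step3}. Alternatively, a direct repair of your Step 1: once $T_b=zu_1v_1\ldots u_mv_mabv$ is known, if some $Q_j$ ($j\le m$) contained $v_mv$, then the path $zu_1v_1\ldots u_mv_mv$ is itself a $\cp$-rainbow $F$-AAP --- its $m+1$ non-$F$ edges $zu_1,v_1u_2,\ldots,v_{m-1}u_m,v_mv$ can be represented by the $m+1$ distinct colours $Q_1,\ldots,Q_m,T_b$, with $Q_j$ taking $v_mv$ --- contradicting the standing assumption. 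Either repair completes your argument; without one of them the proposal has a genuine hole precisely at its load-bearing step.
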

\begin{proof}
By symmetry, it suffices to show when $Q_i$ contains $v_m a$ for some $i \leq m$.
By Claim~\ref{claim:step1+2}, $Q_i = T_b = z u_1 v_1 \ldots u_m v_m a b v$.

Since $Q_i = T_b = z u_1 v_1 \ldots u_m v_m a b v$, Claim~\ref{claim:step3} gives  \[Q_{m+1} = \cdots = Q_{2m} = T_a = zv_1u_1 \dots v_m u_m b a v.\]
Applying Claim~\ref{claim:step3} with $Q_{m+1} = T_a = zv_1u_1 \dots v_m u_m b a v$, we obtain \[Q_1 = \cdots = Q_m = T_b = zu_1v_1 \dots u_mv_m a b v,\]
as required.
\end{proof}

This completes the proof of Theorem \ref{thm:rb.a.a.path}.\phantom\qedhere
\end{proof}

\subsection{The proof of Theorem~\ref{thm: main}}
In this section, we derive Theorem~\ref{thm: main} from Theorem \ref{thm:rb.a.a.path}.

Let $G$ be a graph.
Let $n \geq 3$ and $E_1,\ldots,E_{3n-3}$ matchings of size $n$ in $G$.
Since $3n-3 \geq 3(n-1)-2$, Theorem~\ref{3nminus2} guarantees the existence of a rainbow matching of size $n-1$, say $F$.
Without loss of generality, let $F = \{e_{1},\ldots, e_{n-1}\}$ where $e_i \in E_i$ for each $i \in [n-1]$.
Since $E_i$ is a matching of size $n$ for each $i \in [3n-3]\setminus[n-1]$, the set $E_i \cup F$ contains an $F$-AAP $P_i$.
If there exists a rainbow $F$-AAP $Q$, then $(Q \setminus F) \cup (F \setminus Q)$ is a rainbow matching of size $n$.

Thus, assuming negation, there is no rainbow $F$-AAP.
Then part (2) of Theorem~\ref{thm:rb.a.a.path} implies $P_n, \ldots, P_{3n-3}$ form an $(n-1)$-badge $\mathcal{B}$.
Note that, since $E_1,\ldots,E_{n-1}$ are themselves matchings of size $n$, each contains an edge that does not belong to $F$.

\medskip

\noindent {\bf Case I.} Suppose $\cb$ is a single $(n-1)$-origamistrip $OS$.
Let $x$ and $y$ be its endpoints.
We may assume that for each $i \leq n-1$, $e_i$ is  the $i$-th edge of $F$ on the path $P^A(OS)$ starting at $x$.
We will apply Lemma~\ref{lem: add an edge} with the edges of $E_1 \setminus F \neq \emptyset$ to find a rainbow matching of size $n$.
There are two sub-cases:
\begin{enumerate}
    \item Assume one of the endpoints of some edge $e \in E_1 \setminus F$ is not a vertex in $OS$.
    If $e$ is incident to either $x$ or $y$, then take a rainbow matching $R \subset A(OS)$ of size $n-1$ that is vertex-disjoint from $e$.
    Then $R \cup \{e\}$ is a rainbow matching of size $n$.
    Thus we may assume $e$ is not incident to none of $x$ and $y$.
    Now Lemma~\ref{lem: add an edge} gives a rainbow $F$-AAP, say $Q$. 
    Then $M:=(F \setminus Q) \cup (Q \setminus F)$ is a matching of size $n$. Either $M$ does not contain $e_1$ and is already rainbow, or will become rainbow upon replacing $e_1$ with one of the edges from $x$ to $e_1$ in $OS$ (as there are $2n-2$ different colors present among those two edges).
    \item Otherwise, since $OS$ has $2n$ vertices and $E_1$ is a matching of size $n$, $E_1$ must be a perfect matching on $V(OS)$. If $zy \in E_1$ with some $z \neq x$, then Lemma~\ref{lem: add an edge} gives a rainbow matching of size $n$ as above. If $xy \in E_1$, then $\{xy, e_A, e_B, e_3, \ldots, e_{n-1}\}$ is a rainbow matching of size $n$, where $e_A \in A(OS)$ and $e_B \in B(OS)$ are the edges of $OS$ that connect $e_1$ and $e_2$.
\end{enumerate}
    Note that the last step requires $n \geq 3$.
    Indeed, $(3n-3, n) \to n$ is not true for $n \leq 2$.
    This will be discussed at the end of Section~\ref{sec:cooperative}: see Example~\ref{ex:K4}.

\smallskip

\noindent {\bf Case II.} There are two  origamistrips in $\cb$.
Let $OS$ be the origamistrip that contains $e_1$.
Take $e \in E_1 \setminus F$ and let $\cb' = \cb \cup \{\{e\}\}$.
By Corollary~\ref{rmk:extraedge}, there exists a $\cb'$-rainbow $F$-AAP $Q$ containing $e$.
We claim that $(Q \setminus F) \cup (F \setminus Q)$ is a rainbow matching of size $n$.
This follows by observing that for each $n \leq i \leq 3n-3$, $E_i = (P_i \setminus F) \cup (F \setminus M(OS_i))$, where $OS_i$ is the origamistrip that contains $P_i$.

\bigskip

\section{A cooperative generalization}\label{sec:cooperative}

In Theorem \ref{thm: main} the matching of size $n$ was rainbow with respect to
a collection edge sets, each being itself a matching of size $n$. 
As with various other results on rainbow sets, there are also ``cooperative'' versions, in which the assumption is not on each set individually, but on the union of sub-collections. For example, 
B\'ar\'any's famous colorful Carath\'{e}odory theorem \cite{barany} was given in \cite{hpt} a cooperative version. There, the requirement on individual sets-that their convex hull contains the origin-is relaxed to the condition that the union of every two sets convexes the origin.  
In this section we prove two results in this vein:
\begin{enumerate}
    \item[(a)] A cooperative version of Theorem \ref{3nminus2} (Theorem \ref{cor: cooperating any matching}), and
    \item[(b)] A generalization of Theorem \ref{thm: main} (Theorem \ref{cor:realmain}), valid  for $n \ge 3$.
\end{enumerate}

As before, the core of the proofs will be in  results on rainbow paths in networks. Our point of departure  is:
 
\begin{theorem}\label{thm: cooperative any}
Let $F$ be a matching of size $k$ in a graph $G$, let $t$ be a non-negative integer, and let $\mathcal{A} = (A_1,\ldots,A_m)$ be a family of sets of edges, satisfying the condition that the union of any $t+1$ sets $A_i$ contains an $F$-AAP.
Let $J=\{j \mid A_j \subseteq F\}$. If there does not exist an $\mathcal{A}$-rainbow $F$-AAP 
then
\begin{enumerate}
\item $m=|\mathcal{A}| \leq 2k+t$, and
\item If $|\mathcal{A}| = 2k+t$, then $|J|=t$ and the $2k$ sets $A_j$, $j \notin J$ form a generalized $k$-badge.
\end{enumerate}
\end{theorem}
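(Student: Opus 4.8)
The plan is to mimic the structure of the proof of Theorem~\ref{thm:rb.a.a.path}, reducing the cooperative statement to the ``individual'' one by a splitting trick, and then handling the leftover sets $A_j \subseteq F$ separately. As before, (2) implies (1): if $|\mathcal{A}| > 2k+t$ one passes to a subfamily of size $2k+t+1$, applies (2) to a sub-subfamily of size $2k+t$, and uses an analogue of Corollary~\ref{rmk:extraedge} to extract a rainbow $F$-AAP. So it suffices to prove (2), which I would do by induction on $t$, with the base case $t=0$ being exactly Theorem~\ref{thm:rb.a.a.path} (after noting that when $t=0$ the cooperative hypothesis on singletons says each $A_i$ contains an $F$-AAP, so in particular $J=\emptyset$ and no $A_j$ is contained in $F$; any $A_j \subseteq F$ would violate ``$A_j$ contains an $F$-AAP'').

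For the inductive step, the key observation is that the cooperative condition is essentially about \emph{edges of $A_i$ outside $F$}: an $F$-AAP uses only non-$F$ edges (its non-$F$ edges form the path after symmetric difference, but the alternating path itself alternates between $F$ and non-$F$ edges). More precisely, I would replace each $A_i$ by $A_i' := (A_i \setminus F) \cup F$; this does not change which unions contain $F$-AAPs, nor which rainbow $F$-AAPs exist, and now ``$A_j \subseteq F$'' becomes ``$A_j' = F$'', i.e.\ $A_j \subseteq F$ iff $A_j$ contains no non-$F$ edge. The heart of the argument is then the following dichotomy. If \emph{some} $A_j$ is contained in $F$ (equivalently contributes no useful edge), move it into $J$, delete it, and observe the remaining $m-1$ sets still satisfy the cooperative condition with parameter $t-1$: indeed any $t$ of them, together with the deleted $A_j$ (which adds nothing), form a union of $t+1$ original sets, hence contain an $F$-AAP — wait, that is backwards; rather, any $t$ of the remaining sets union to something containing an $F$-AAP because already $t+1$ of the original sets did and we may take the $t$ remaining ones plus $A_j$, whose union equals the union of the $t$ remaining ones since $A_j \subseteq F$ and an $F$-AAP only needs non-$F$ edges. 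Hmm — more carefully: from $t+1$ original sets one of which may be $A_j$; if none is $A_j$ we have $t+1 \ge t$ sets among the remaining ones, overkill; so in fact the remaining sets satisfy the \emph{stronger} condition that every $t+1$ of them contains an $F$-AAP, but we actually want ``every $t$''. The correct statement: since $A_j \subseteq F$, for any $t$-subset $S$ of the remaining sets, $\bigcup(S \cup \{A_j\}) \setminus F = \bigcup S \setminus F$, and $S \cup \{A_j\}$ has $t+1$ members, so contains an $F$-AAP, which therefore lies in $\bigcup S$. Thus the remaining $m-1$ sets satisfy the hypothesis with parameter $t-1$; by induction $m-1 \le 2k+(t-1)$ so $m \le 2k+t$ with equality forcing $|J \setminus \{j\}| = t-1$ and the $2k$ non-$J$ sets forming a generalized $k$-badge, which is exactly what we want. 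The other horn: if \emph{no} $A_j$ is contained in $F$, then in particular every singleton $\{A_i\}$... no, the cooperative hypothesis is only on $(t{+}1)$-unions. This is the hard case, and here I would argue that then $t$ can effectively be absorbed: the union condition on $(t+1)$-sets, together with no set being useless, should let me replace the family by $m-t$ genuine $F$-AAPs (one chosen inside each of $m-t$ disjoint-ish unions) — but this double-counts colors.

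The main obstacle is exactly this last point: reducing a cooperative family with no trivial member to the non-cooperative Theorem~\ref{thm:rb.a.a.path} without losing the sharp count, and identifying why the extra additions/deletions of $F$-edges (producing a \emph{generalized} rather than genuine badge) are forced and sufficient. I expect the resolution is that one never actually reaches the ``no trivial member'' case with equality $m = 2k+t$ and $t>0$: if $|\mathcal{A}| = 2k+t$ with $t > 0$ and no $A_j \subseteq F$, I would try to derive a contradiction by building a rainbow $F$-AAP directly — pick a minimal union $A_{i_1} \cup \cdots \cup A_{i_{t+1}}$ containing an $F$-AAP $P$, greedily color the edges of $P$ from these $t+1$ sets (possible by a defect-Hall / minimality argument on which edges each $A_{i_\ell}$ can contribute), leaving the other $2k-1$ sets unused, then apply Theorem~\ref{thm:rb.a.a.path}'s counting to see $2k-1 > 2k - \text{something}$ fails — this needs care. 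Alternatively, and probably cleaner: induct on $t$ by \emph{always} being able to find a removable set, arguing that if $|\mathcal{A}| = 2k+t$ and no member is contained in $F$, then some member is ``redundant'' in the sense that every $(t+1)$-union containing it still works without it, so we can still drop to parameter $t-1$; the existence of such a redundant set when $m$ is this large is the crux, and I would prove it by a counting/pigeonhole argument comparing the number of $F$-AAPs needed to the number of sets. If that redundancy lemma holds, the induction closes and the generalized-badge conclusion on the non-$J$ sets follows verbatim from the inductive hypothesis together with the edge-additions tracked through the reduction $A_i \mapsto A_i'$.

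Once (2) is established, (1) and the deduction of the cooperative analogues of Theorems~\ref{3nminus2} and~\ref{thm: main} follow by the same maximal-rainbow-matching argument used in Section~3.1, so I would not belabour them here.
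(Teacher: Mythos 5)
Your reduction in the case where some $A_j \subseteq F$ is essentially sound (after the normalization $A_i \mapsto (A_i \setminus F) \cup F$, which legitimately fixes the containment quibble you noticed), and it matches one branch of the actual argument; likewise treating $t=0$ via Theorem~\ref{thm:rb.a.a.path} is right, modulo the point that the generalized-badge conclusion there also needs Lemma~\ref{lem: add an edge} to show every edge of $A_i$ outside the chosen path $P_i$ lies in $F$. But the crux --- the case $J=\emptyset$ with $t>0$ and $m\ge 2k+t$ --- is exactly where your proposal stops: you offer two speculative strategies (greedily colouring a minimal $(t+1)$-union, or a ``redundancy'' pigeonhole lemma) without proving either, and neither is how the difficulty is resolved; nothing forces a redundant set in this situation, and what must actually be shown is that this configuration cannot occur, i.e.\ a rainbow $F$-AAP must exist. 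The paper does this by inducting on $k+t$ (not on $t$ alone), so that it can reduce $k$ by surgery on the graph: pick $va \in A_1$ with $v \notin \bigcup F$ and $ab \in F$. If $vb$ lies in some other $A_i$, contract the triangle $vab$ and apply the structural part (2) of the induction hypothesis to the remaining $2(k-1)+t$ truncated sets, then apply it a second time after swapping $A_1$ for one of the sets that was forced inside $F'$; this forces $t+1$ of the original sets to be contained in $F \cup \{va,vb\}$, whose union contains no $F$-AAP, contradicting the cooperative hypothesis. If $vb$ lies in no other set, delete the vertex $a$; since $2k+t-1 > 2(k-1)+t$, part (1) of the induction hypothesis gives a rainbow $F'$-AAP, which is re-attached via $\{ab,va\}$, using the odd-cycle contraction argument of Claim~\ref{claim: no five cycle} when the path runs from $v$ to $b$. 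Your induction on $t$ alone has no mechanism for reducing $k$, so it cannot reproduce this step, and this is a genuine gap rather than a detail to be checked.

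A secondary issue: your derivation of (1) from (2) via an analogue of Corollary~\ref{rmk:extraedge} does not go through as stated, because the extra set beyond the first $2k+t$ may itself be contained in $F$ and then contributes no edge with which to augment a badge path. The paper instead iterates conclusion (2) over subfamilies of size $2k+t$ to force $|J|\ge t+1$, and then the union of those $t+1$ sets lies inside $F$, hence contains no $F$-AAP, contradicting the hypothesis; some such argument is needed in your write-up as well.
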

Recall ``generalized badges'' are badges with $F$-edges arbitrarily added and removed (see Definition \ref{genbadge}).
Note that Theorem~\ref{thm:rb.a.a.path} is the case $t = 0$ of Theorem~\ref{thm: cooperative any}.

\begin{proof}[Proof of Theorem~\ref{thm: cooperative any}]
By induction on $k + t$. Throughout the proof we assume, by negation, that there is no $\mathcal{A}$-rainbow $F$-AAP.
If $k = 0$, i.e. if $F  = \emptyset$, then  both parts of the theorem are obvious since any edge forms a rainbow $F$-AAP.

Consider next the case $t = 0$. 
By the condition of the theorem, $A_i$ contains an $F$-AAP $P_i$ for every $i \le m$. So (1) follows from Theorem~\ref{thm:rb.a.a.path}.
To prove (2), suppose $|\ca| = 2k$.
Since $P_i \nsubseteq F$, we have $|J|=0$, as required in (2).
Since $\{P_1,\ldots,P_{2k}\}$ do not have a rainbow $F$-AAP, they form a $k$-badge by Theorem~\ref{thm:rb.a.a.path}.
By Lemma~\ref{lem: add an edge}, $A_i \setminus P_i \subseteq F$  for every $i$, meaning that the sets $A_i$ form a generalized badge. 

So assume instead both $k,t > 0$.
It suffices to prove (2), since it implies (1). To see this, assume (2) and suppose $|\ca| > 2k+t$. 
By (2), we have $|J \cap [2k+t]| = t > 0$.
Let $i \in J \cap [2k+t]$.
Then again, (2) gives $|J \cap \left([2k+t+1]\setminus\{i\}\right)| = t$.
Thus $|J| \geq t+1$.
But $\bigcup_{j \in J}A_j \subset F$, so does not contain an $F$-AAP, contrary to assumption. 

If $|J| = t$, then, for each $i \notin J$, applying the condition of the theorem  to $J \cup \{i\}$ yields that  $A_i$ contains an $F$-AAP. Let $\ca' = \ca \setminus \{A_i: i \in J\}$. As in the above proof of the case $t=0$,  Theorem \ref{thm:rb.a.a.path}  and Corollary \ref{rmk:extraedge} imply $\ca'$ forms a generalized badge, as desired.

Now suppose $|J| < t$.
If $J \neq \emptyset$, say $A_i \subseteq F$ for some $i$, then the union of any $t$ sets in $\A' := \mathcal{A} \setminus \{A_i\}$ contains an $F$-AAP. 
Since $|\ca'| = 2k+t-1$ and $|J \setminus \{i\}| < t-1$, the induction hypothesis applied to $\A'$ gives a rainbow $F$-AAP.
Thus it is sufficient to show that if $A_i \setminus F \neq \emptyset$ for all $i$, then there is necessarily an $\ca$-rainbow $F$-AAP.
Since $\bigcup \mathcal{A} \cup F$ contains some $F$-AAP, we may assume that $A_1$ contains an edge $va$ where $v \notin \bigcup F$ and $ab \in F$. Let $F' = F \setminus \{ab\}$.

Suppose first that the edge $vb$ belongs to some $A_i$, ~$i \neq 1$,  say to $A_2$. 
Let $G'$ be the graph obtained from $G$ by contracting  $v,a,b$ to one vertex $w$.
Consider $A_i' = A_i\setminus\{va,vb,ab\}$ as  (possibly empty) edge sets in $G'$, where every edge of the form $xy$ for some $x \in \{a,b,v\}, y \not\in  \{a,b,v\}$ is replaced by $wy$.
If $\mathcal{A}'_1 = \{A_3',\ldots, A_{2k+t}'\}$ has a rainbow
$F'$-AAP in $G'$, then it can be extended to a rainbow $F$-AAP
in $G$, a contradiction.
So, we may assume that  $\mathcal{A}'_1$ has no rainbow $F'$-AAP.
Applying the induction hypothesis, and relabelling if necessary, we may assume that $A_{2k+1}',\ldots,A_{2k+t}' \subseteq F'$ and that $\cb := \{A_3',\ldots,A_{2k}'\}$ is a generalized $(k-1)$-badge.

Let $i \in \{2k+1, \ldots ,2k+t\}$. Since $A'_i \subseteq F'\subset F$,  recalling that  $A_i \setminus F \neq \emptyset$, it follows that $A_i$ contains $va$ or $vb$.
Without loss of generality, assume $va \in A_{2k+t}$. 
Let $\mathcal{A}'_2 = \{A_1',A_3',A_4',\ldots,A_{2k+t-1}'\}
=\mathcal{A}'_1 \cup \{A_1'\}\setminus \{A_{2k+t}'\}
$.
By the same reasoning as above, $\mathcal{A}'_2$ does not contain a rainbow $F'$-AAP.
Then necessarily $A_1' \setminus F' = \emptyset$, because $\cb \subseteq  \mathcal{A}'_2$ is a $(k-1)$-generalized badge, and the sets $A'_i \not\in \cb$ are, by definition, contained in $F'$.  
Therefore $A_1,A_{2k+1},\ldots,A_{2k+t} \subseteq F \cup \{va, vb\}$.
But then these $t+1$ sets have no $F$-AAP, a contradiction.

Thus  we may assume that $vb\not \in A_i$ for any $i>1$. 
Let $G_a = G - a$, $B_i = A_i \setminus \{a\}$ for each $i > 1$.

Let $\D = \{B_2,\ldots,B_{2k+t}\}$.
Then the union of any $t+1$ members of $\D$ contains an $F'$-AAP in $G_a$.
Since \[|\D| = 2k+t-1 > 2(k-1) + t  \;\;\;\text{and}\;\;\; |F'| = k-1,\]
by the induction hypothesis, there exists a $\D$-rainbow $F'$-AAP $P$ in $G_a$.
If the two endpoints of $P$ are $v$ and $b$, then $P \cup \{ab, va\}$ is an odd rainbow $F$-alternating cycle of length at least $5$ in $G$, and the argument proceeds as in the proof of Claim~\ref{claim: no five cycle}: contract the cycle to a vertex $w$, set aside the represented color sets, use the inductive hypothesis for (1) to find an AAP $R$ on what is left, and then use the edges of the odd cycle to make an $F$-AAP in $G$ if $R$ ends in $w$.
Thus we may assume that at least one of $v$ and $b$ is not an endpoint of $P$.
If $b$ is not an endpoint of $P$, then $P$ is also
an $\A$-rainbow $F$-AAP in $G$.
If $b$ is an endpoint of $P$ but $v$ is not, then $P \cup \{ab, va\}$ is an $\A$-rainbow $F$-AAP in $G$.
In all cases we have thus found the desired rainbow $F$-AAP for contradiction.
\end{proof}

We  now use  
 Theorem \ref{thm: cooperative any} to prove a cooperative rainbow matchings result. 
 We shall write  $(m, q, n)\to  k$ for the statement ``any $m$ sets of edges such that the union of every $q$ of them contains a matching of size $n$, have a rainbow matching of size $k$''. 
\begin{theorem}\label{cor: cooperating any matching}
If $n \geq 1$ and $t \geq 0$ then $(3n-2+t, t+1, n) \to n$. 
\end{theorem}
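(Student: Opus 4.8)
The statement $(3n-2+t,\, t+1,\, n)\to n$ should follow from Theorem~\ref{thm: cooperative any} in essentially the same way that Theorem~\ref{3nminus2} follows from part~(1) of Theorem~\ref{thm:rb.a.a.path}. I would proceed by induction on $n$, or equivalently just bootstrap: given $3n-2+t$ edge sets $E_1,\dots,E_{3n-2+t}$, the union of every $t+1$ of which contains a matching of size $n$, I first want to produce a rainbow matching $F$ of size $n-1$. This is exactly the content of the same theorem with $n$ replaced by $n-1$: indeed $3n-2+t = 3(n-1)-2+(t+3)$, so applying the statement $(3(n-1)-2+t',\,t'+1,\,n-1)\to n-1$ with $t' = t+3$ — noting that the union of every $t+1\le t'+1$ sets already contains a matching of size $n>n-1$ — gives a rainbow matching of size $n-1$. (The base case $n=1$ is trivial: any single nonempty edge set among $t+1$ sets whose union is nonempty gives the required matching.)

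**The main step.** With $F = \{f_1,\dots,f_{n-1}\}$ a rainbow matching of size $n-1$, say $f_i \in E_i$, set $k = n-1$. For each remaining index $i \in \{n,\dots,3n-2+t\}$ the relevant edge sets are $A_i := E_i$, viewed relative to $F$; there are $2(n-1)+t+1$ of them, one more than $2k+t$. The hypothesis "union of every $t+1$ of the $E_i$ contains a matching of size $n$" implies that the union of every $t+1$ of these $A_i$, together with $F$, contains an $F$-AAP: a matching of size $n = k+1$ together with $F$ of size $k$ forces an augmenting alternating path by the standard symmetric-difference argument. So if there were no $\mathcal A$-rainbow $F$-AAP, Theorem~\ref{thm: cooperative any}~(1) would force $|\mathcal A| \le 2k+t = 2(n-1)+t$, contradicting $|\mathcal A| = 2(n-1)+t+1$. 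Hence there is a rainbow $F$-AAP $Q$, and $(Q\setminus F)\cup(F\setminus Q)$ is a rainbow matching of size $n$, using colors from $F$'s indices on the $F$-edges of $Q$ and the distinct colors of the $A_i$'s on the non-$F$ edges of $Q$.

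**What needs care.** The one genuinely delicate point is bookkeeping the colors: $Q$ uses non-$F$ edges colored by distinct indices $i\in\{n,\dots,3n-2+t\}$, while the $F$-edges retained in the final matching keep their colors $1,\dots,n-1$ — these index ranges are disjoint, so the matching is genuinely rainbow; one should check that $Q$ being an $F$-AAP (augmenting, so the symmetric difference is strictly larger) guarantees $|(Q\setminus F)\cup(F\setminus Q)| = |F|+1 = n$. A second, smaller point is verifying that "every $t+1$ of the $A_i$ have union containing an $F$-AAP" really does follow from "every $t+1$ of the $E_i$ have union containing a matching of size $n$": this is just the observation that if $M$ is a matching of size $k+1$ in a graph containing the size-$k$ matching $F$, then $M \cup F$ contains an $F$-alternating augmenting path (a component of the symmetric difference $M\triangle F$ that has more $M$-edges than $F$-edges), and such a component lies inside $\bigcup(t+1\text{ sets}) \cup F$. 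I expect the bootstrap for the existence of the size-$(n-1)$ rainbow matching to be the part most likely to need a careful statement, since one must be sure the inductive instance of the theorem is genuinely smaller (here $n-1 < n$ compensates for the larger $t' = t+3$), but it presents no real obstacle.
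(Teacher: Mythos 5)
Your proposal is correct and follows essentially the same route as the paper: induct on $n$, extract a rainbow matching $F$ of size $n-1$ (your reindexing $3n-2+t=3(n-1)-2+(t+3)$ is a harmless variant of the paper's direct use of the induction hypothesis), then apply Theorem~\ref{thm: cooperative any}(1) with $k=n-1$ to the $2(n-1)+t+1$ remaining colour classes to get a rainbow $F$-AAP, whose symmetric difference with $F$ is the desired rainbow matching of size $n$. The only cosmetic fix: the family fed into Theorem~\ref{thm: cooperative any} should be the sets $E_i\cup F$ rather than $E_i$, since the theorem's hypothesis requires the union of any $t+1$ of the sets themselves to contain an $F$-AAP (which generally needs $F$-edges); this is exactly what the paper does, and your own observation that the augmenting component lies in $\bigl(\bigcup_{i\in I}E_i\bigr)\cup F$ already supplies the justification, while rainbowness is unaffected because only non-$F$ edges need representatives.
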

\begin{proof}
The proof goes along similar lines to Theorems \ref{drisko} and \ref{3nminus2}. We induct on $n$. The case $n=1$ is simple (a single edge is always rainbow), so we may assume $n > 1$.

Let $G$ be any graph and $E_1,\ldots,E_{3n-2+t}$  sets of edges in $G$ such that the union of every $t+1$ of them contains a matching of size $n$.
By the induction hypothesis, there exists a rainbow matching $F$ representing $E_j, ~j \in K$ for a set $K \subseteq  [3n-2+t]$ of size $n-1$.
Then $|[3n-2+t]\setminus K|=2(n-1)+t+1$.
For every set $I\subseteq  [3n-2+t] \setminus K$ of size  $t+1$ the set $\bigcup_{i \in I} (E_i \cup F)$ contains a matching of size $n$, and hence an $F$-AAP.
Hence, by Theorem~\ref{thm: cooperative any} (1), applied  with $k=n-1$, the family $(E_i \cup F \mid i \in  [3n-2+t]\setminus K)$  has a rainbow $F$-AAP $R$.
Since $R\setminus F$ and $F \setminus R$ are vertex disjoint matchings, $(R \setminus F) \cup (F \setminus R)$ is a rainbow matching of size $n$.
\end{proof}
Putting $t=0$ yields Theorem \ref{3nminus2} ($(3n-2,n)\to n$). The proof here is different from that in \cite{abchs}, in which the main weapons were the Edmonds-Gallai decomposition theorem and the Edmonds' blossom algorithm.

For $n \ge 3$ Theorem~\ref{cor: cooperating any matching} can be improved, to yield a cooperative version of Theorem \ref{thm: main}.

 \begin{theorem}\label{cor:realmain}
Let $t \geq 0$ and $n \ge 3$.
Then $(3n-3+t, t+1, n) \to n$.
\end{theorem}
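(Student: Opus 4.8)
The plan is to mimic the proof of Theorem~\ref{cor: cooperating any matching}, but replace the final invocation of Theorem~\ref{thm: cooperative any}(1) by the stronger structural conclusion (2), and then argue directly that the resulting generalized badge cannot really obstruct a rainbow matching of size $n$, exactly as Theorem~\ref{thm: main} was extracted from Theorem~\ref{thm:rb.a.a.path} in Section~3.3. Concretely, given $G$ and edge sets $E_1,\dots,E_{3n-3+t}$ with the property that the union of every $t+1$ of them contains a matching of size $n$, first apply Theorem~\ref{cor: cooperating any matching} (with $n$ replaced by $n-1$, noting $3n-3+t = 3(n-1)-2+(t+1)$ and that the union of every $(t+1)+? $ — actually every $t+1$ — contains a matching of size $n-1$ since it contains one of size $n$) to obtain a rainbow matching $F$ of size $n-1$, represented by $E_j$ for $j$ in some index set $K$ of size $n-1$. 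The remaining $2(n-1)+t+1 = 2n+t-1$ sets, together with $F$, satisfy: the union of every $t+1$ of them contains an $F$-AAP. If there is an $(E_i\cup F\mid i\notin K)$-rainbow $F$-AAP, we are done as usual. Otherwise Theorem~\ref{thm: cooperative any}(2) applies with $k=n-1$: among the sets $A_i=E_i\cup F$ ($i\notin K$), exactly $t$ of them lie inside $F$ — impossible here, since each such $E_i$, being a matching of size $n$ after intersecting with the union of $t+1$ sets including it, is not contained in $F$. Wait: this is the key subtlety — in the cooperative setting an individual $E_i$ need not contain a matching of size $n$, so $A_i=E_i\cup F$ could conceivably be contained in $F$ if $E_i\subseteq F$. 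So I must handle that possibility.

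The structure therefore is: after obtaining $F$ of size $n-1$ and assuming no rainbow $F$-AAP, Theorem~\ref{thm: cooperative any}(2) tells us the index set $I:=[3n-3+t]\setminus K$ of size $2n+t-1=2(n-1)+t+1$ has a distinguished subset $J\subseteq I$ with $|J| = ?$. Here $|I| = 2k+t+1$ where $k=n-1$, which exceeds $2k+t$, so part (1) already gives a rainbow $F$-AAP — contradiction — \emph{unless} I only keep $2k+t$ of them. So instead I pick any $i_0\in I$, set it aside, and apply Theorem~\ref{thm: cooperative any}(2) to the remaining $2k+t$ sets $(E_i\cup F\mid i\in I\setminus\{i_0\})$: either there is a rainbow $F$-AAP (done), or these form a configuration with $|J|=t$ (sets contained in $F$) and the other $2k$ sets forming a generalized $(n-1)$-badge $\cb$. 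Now I bring the set $E_{i_0}$ back into play. The generalized badge $\cb$ is, by Lemma~\ref{lem: add an edge} / Corollary~\ref{rmk:extraedge} and the argument at the end of Section~3.3, such that adding \emph{any} single non-$F$ edge produces a rainbow $F$-AAP; moreover the underlying genuine badge $\cb_0$ (strip off the added/removed $F$-edges) has skeleton $F$. So I want to find an edge $e$ with $e\in E_{i_0}\setminus F$ that can be added to some path of $\cb_0$; then Corollary~\ref{rmk:extraedge} yields a $(\cb_0\cup\{\{e\}\})$-rainbow $F$-AAP $Q$, and — just as in Case~II of Section~3.3 — the matching $(Q\setminus F)\cup(F\setminus Q)$ has size $n$ and, re-coloring via $E_i=(P_i\setminus F)\cup(F\setminus M(OS_i))$, is rainbow. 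If $E_{i_0}\subseteq F$ we cannot do this; but then $J\cup\{i_0\}$ is a set of $t+1$ indices whose union is contained in $F$, which contains no matching of size $n$ (as $|F|=n-1$), contradicting the cooperative hypothesis. So $E_{i_0}\setminus F\ne\emptyset$ and the argument closes — \emph{except} when the generalized badge $\cb$ is a single $(n-1)$-origamistrip, which is precisely Case~I of Section~3.3 and must be treated by the same more delicate sub-case analysis (this is where $n\ge 3$ is needed, via Example~\ref{ex:K4}).

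I expect the main obstacle to be Case~I: $\cb$ being a single $(n-1)$-origamistrip $OS$, where a single added edge from $E_{i_0}\setminus F$ need not immediately give a rainbow matching of size $n$ because the resulting $F$-AAP $Q$ may pass through an endpoint $x$ or $y$ of $OS$ with only $2n-2$ colors available on its first two $F$-incident edges, forcing the re-coloring trick; and the case $xy\in E_{i_0}$ requires producing the explicit matching $\{xy,e_A,e_B,e_3,\dots,e_{n-1}\}$, valid only for $n\ge 3$. The new wrinkle compared to Section~3.3 is that here $\cb$ is a \emph{generalized} badge, so its $F$-AAPs $P_i$ differ from the literal origamistrip paths by some added/deleted $F$-edges; I will need to check that the symmetric-difference bookkeeping ``$E_i=(P_i\setminus F)\cup(F\setminus M(OS_i))$'' used at the end of Section~3.3 still goes through when $M(OS_i)$ is replaced by the actual skeleton relevant to $P_i$, i.e.\ that for a generalized badge the relation $|(R\setminus F)\cup(F\setminus R)| = n$ together with the extra $F$-edges/colors still yields a genuinely rainbow matching. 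A second, minor obstacle is verifying the base application of Theorem~\ref{cor: cooperating any matching}: I should double-check the index arithmetic $3n-3+t = 3(n-1)-2+(t+1)$ and that ``union of every $(t+1)$ contains a matching of size $n$'' implies ``union of every $(t+1)$ contains a matching of size $n-1$'', which is immediate. Everything else is a routine transcription of the $t=0$ arguments of Sections~3 and~3.3 with the cooperative Theorem~\ref{thm: cooperative any} substituted for Theorem~\ref{thm:rb.a.a.path}.
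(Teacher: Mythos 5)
There is a genuine gap, and it stems from an off-by-one count that derails the whole strategy. After extracting the rainbow matching $F=\{e_1,\dots,e_{n-1}\}$ represented by $E_1,\dots,E_{n-1}$, the number of remaining colors is $3n-3+t-(n-1)=2n-2+t=2k+t$ with $k=n-1$, not $2k+t+1$ as you assert. So part (1) of Theorem~\ref{thm: cooperative any} does \emph{not} already give a rainbow $F$-AAP, and there is no spare color $E_{i_0}$ to ``set aside and bring back'': in the extremal situation \emph{all} $2k+t$ non-representing colors are consumed by the structure of part (2) ($t$ of them contained in $F$, the other $2k$ forming the generalized $(n-1)$-badge). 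If you nevertheless set one aside, you are left with $2k+t-1$ sets, to which part (2) does not apply, so you get no badge structure at all. Your contradiction argument for the case $E_{i_0}\subseteq F$ is therefore moot, and the source of the extra non-$F$ edge in your plan simply does not exist.

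The real crux, which your proposal never confronts, is that the extra edge must come from $D_1=E_1\cup E_{3n-2}\cup\cdots\cup E_{3n-3+t}$ (a union of $t+1$ sets, hence containing a matching of size $n$ and thus an edge outside $F$), i.e.\ essentially from $E_1$ --- a color that already represents $e_1\in F$. Adding such an edge via Lemma~\ref{lem: add an edge} or Corollary~\ref{rmk:extraedge} produces an $F$-AAP $Q$, but $(Q\setminus F)\cup(F\setminus Q)$ may use the color $E_1$ twice (once for the new edge, once for $e_1\in F\setminus Q$), and resolving this clash is the actual content of the paper's proof: in the single-origamistrip case one routes $Q$ through $e_1$ via the paths $Q_x^z$ or exhibits the explicit matching $\{xy,e_A,e_B,e_3,\dots,e_{n-1}\}$ (this is exactly where $n\ge 3$ enters), and in the multi-strip case one proves Claim~\ref{claim: M_i}, that $D_i\supseteq A(OS_i)\cup(F\setminus M(OS_i))$, in order to recolor $e_1$ by a color not used on $Q$. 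A sanity check that your plan cannot be right as stated: if the extra edge could always be drawn from a fresh, unused color, the resulting matching would be rainbow with no case analysis and no use of $n\ge 3$, contradicting Example~\ref{ex:K4}. Your high-level outline (use Theorem~\ref{cor: cooperating any matching} for $F$, then the $t{=}0$-style badge analysis with Theorem~\ref{thm: cooperative any}(2)) matches the paper, but the proof as proposed does not close.
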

\begin{proof}
Let $n \geq 3$, $t \geq 0$. 
Let $E_1,\ldots,E_{3n-3+t}$ be (possibly empty) edge sets in a graph $G$ such that the union of any $t+1$ of them contains a matching of size $n$.
By Theorem~\ref{cor: cooperating any matching}, we know
\[(3(n-1) - 2 + t, t+1, n-1) \to n-1.\]
Since $3n-3 + t \geq 3(n-1)-2+t$, we can find a rainbow matching $F$ of size $n-1$.
Without loss of generality, let $F = \{e_{1},\ldots, e_{n-1}\}$ where $e_i \in E_i$ for each $i \in [n-1]$.

Since $\bigcup_{i \in I} E_i$ contains a matching of size $n$ for each $I \subseteq [3n-3+t] \setminus [n-1]$ of size  $t+1$, the set $\left( \bigcup_{i \in I} E_i \right) \cup F$ of edges  contains an $F$-AAP.
If there exists a rainbow $F$-AAP $Q$, then $(Q \setminus F) \cup (F \setminus Q)$ is a rainbow matching of size $n$.

Thus, assuming negation, there is no rainbow $F$-AAP.
Then by part (2) of Theorem~\ref{thm: cooperative any}, we may assume that $E_{3n-2},\ldots,E_{3n-3+t} \subseteq F$ and $\cb = \{E_n,\ldots,E_{3n-3}\}$ forms an $(n-1)$-generalized badge.
For each $j \in [3n-3]$, let $D_j = E_j \cup E_{3n-2} \cup \cdots \cup E_{3n-3+t}$.
There are two cases: either there is only one origamistrip or there are more.

\smallskip

\noindent {\bf Case I.} Suppose there is a single $(n-1)$-origamistrip $OS$ in $\cb$.
Let $x$ and $y$ be its endpoints.
We may assume that for each $i \leq n-1$, $e_i$ is  the $i$-th edge of $F$ on the path $P^A(OS)$ starting at $x$.
Since $D_1$ contains a matching of size $n$, it must contain an edge not belonging to $F$.
We next proceed as in the proof of Lemma \ref{lem: add an edge} to use this edge to find a rainbow matching of size $n$.

\begin{claim}\label{claim: single OS-1}
If some $e \in D_1$ is not incident to any vertex of $OS$, then there is a rainbow matching of size $n$.
\end{claim}
\begin{proof}
Since there are $n-1$ copies of $P^A(OS)$ there is a rainbow matching $R \subseteq  A(OS) = P^A(OS) \setminus F$ of $\cb$ of size $n-1$.
Then $R \cup \{e\}$ is a rainbow matching of size $n$.
\end{proof}

\begin{claim}\label{claim: single OS-2}
If some $e \in D_1$ connects a vertex $z$ of $OS$ and a vertex not in $OS$, then there is a rainbow matching of size $n$.
\end{claim}
\begin{proof}
If $z \in \{x,y\}$, take a rainbow matching $R \subseteq A(OS)$ of $\cb$ of size $n-1$ that is vertex-disjoint from $e$.
Then $R \cup \{e\}$ is a rainbow matching of size $n$.
Thus we may assume $z \notin \{x,y\}$.

Recalling Observation \ref{qaqb} for the definition of $Q_x^z$, let $Q = Q_x^z + e$.
Then $Q \setminus F$ is an $\{E_1,E_n,E_{n+1},\ldots,E_{3n-3}\}$-rainbow matching of size $|Q \cap F| + 1$.
Since $Q$ contains $e_1$, $F \setminus Q$ is an $\{E_2,\ldots,E_{n-1}\}$-rainbow matching.
Thus $(Q \setminus F) \cup (F \setminus Q)$ is also rainbow (and its size is $n$, as needed).
\end{proof}

By Claim~\ref{claim: single OS-1} and Claim~\ref{claim: single OS-2}, we may assume that any edge of $D_1$ has both endpoints in $OS$.
Since $D_1$ contains a matching of size $n$ and $OS$ consists of $2n$ vertices, $D_1$ contains a perfect matching on $OS$.
In particular, it contains an edge $e$ that is incident to $y$.

Let $e = zy$. Suppose first $z \neq x$.
Let $Q' = Q_x^z + e$.
Then $Q' \setminus F$ is a rainbow matching of size $|Q' \cap F| + 1$.
Since $Q'$ contains $e_1$, it follows that $(Q' \setminus F) \cup (F \setminus Q')$ is a rainbow matching of size $n$.
So, we may assume that $e = xy$.
Since $n-1 \geq 2$, there are disjoint edges $e_A \in A(OS)$ and $e_B \in B(OS)$ that connect $e_1$ and $e_2$.
Then $\{e, e_A, e_B, e_3,\ldots, e_{n-1}\}$ is a rainbow matching of size $n$.

\smallskip

\noindent {\bf Case II.} Now suppose there are two different origamistrips in $\cb$.
For each $n \leq i \leq 3n-3$, let $OS_i$ be the origamistrip that contains $D_i \setminus F$.

\begin{claim}\label{claim: M_i}
Suppose $D_i \setminus F = A(OS_i)$.
Then $D_i \supseteq A(OS_i) \cup (F \setminus M(OS_i))$.
\end{claim}
\begin{proof}
Clearly, $A(OS_i) \subseteq D_i$.
Thus it is sufficient to show that $F \setminus M(OS_i) \subset D_i$.
By the assumption,  $OS_i$ is a $k$-origamistrip for some $k < n-1$, and hence $|D_i \setminus F| = k+1 < n$.
Since $D_i$ contains a matching of size $n$, there are edges of $D_i$ that are not incident to any vertex of $OS_i$.
By Corollary~\ref{rmk:extraedge}, all such edges are in $F \setminus OS_i$.
Since $|A(OS_i)| = k+1$ and $|F \setminus M(OS_i)| = n-1 - k$, it follows that $F \setminus M(OS_i) \subseteq  D_i$.
\end{proof}

Now let $OS$ be the origamistrip that contains $e_1$.
Since $D_1$ contains a matching of size $n$ and $|F| = n-1$, there exists an edge $e \in D_1 \setminus F$. 
In particular, since $D_1 \setminus F \subset E_1$, $e \in E_1$.
Let $\cb' = \cb \cup \{\{e\}\}$.
By Corollary~\ref{rmk:extraedge}, there exists a $\cb'$-rainbow $F$-AAP $Q$ containing $e$.
If $Q$ contains $e_1$, then $(Q \setminus F) \cup (F \setminus Q)$ is immediately a rainbow matching.
Otherwise, $(Q \setminus F) \cup (F \setminus Q)$ contains both $e$ and $e_1$.
We will show that $e_1$ is contained in some $E_{i_0}$ that did not participate in $Q$.
It follows $(Q \setminus F) \cup (F \setminus Q)$ is still rainbow, completing the proof.

If $e_1 \in E_{3n-2} \cup \cdots \cup E_{3n-3+t}$, then we can take $i_0 \in \{3n-2,\ldots 3n-3+t\}$ accordingly.
Suppose not.
Take any origamistrip $OS'$ that is different from $OS$.
Note that at least one of $Q \cap P^A(OS')$ and $Q \cap P^B(OS')$ should be empty.
Without loss of generality, we can assume $Q \cap P^A(OS') = \emptyset$.
Then take any $i_0 \in \{n,\ldots,3n-3\}$ with $E_{i_0} \setminus F \subset A(OS')$.
Then by Claim~\ref{claim: M_i}, $D_{i_0}$ contains $e_1$, implying that $e_1 \in E_{i_0}$, as required.
\end{proof}

The following examples show why the condition $n\ge 3$ is indeed necessary. 

\begin{example}\label{ex:K4}
Consider first the case $n = 1$. Then $t$ empty sets  vacuously satisfy the condition that any $t+1$ of them (satisfy any condition), and they do not have a rainbow matching of size $1$. 

For $n = 2$, let $G = ([4], \binom{[4]}{2}) = K_4$ and let $E_1 = \{12,34\}$, $E_2 = \{13,24\}$, $E_3 = \{14,23\}$, and $E_i = \emptyset$ for all $3 < i \leq t+3$.
Then the union of any $t+1$ of $E_i$'s contains a matching of size $2$, and yet there is no rainbow matching of size $2$.
\end{example}

In fact, these are the only examples showing that $3n-3+t$ colors do not suffice.
Let $t \geq 0$ and $F_1,\ldots,F_{t+3}$ be sets of edges of any graph $G$ such that the union of any $t+1$ $F_i$'s contains a matching of size $2$.
Suppose there is no rainbow matching of size $2$.
We may assume that $F_1 \neq \emptyset$.
Take any edge $e \in F_1$.
By Theorem~\ref{thm: cooperative any}, we may assume that $F_2 \cup \{e\}$ and $F_3 \cup \{e\}$ are $\{e\}$-AAPs from $u$ to $v$ forming a $1$-badge, and $F_i \subseteq  \{e\}$ for each $4 \leq i \leq t+3$.

Note that $F_1 \cup F_4 \cup F_5 \cup \ldots \cup F_{t+3}$ contains a matching of size $2$.
Since it is not rainbow, this implies the existence of an edge $f \neq e$ in $F_1$.
Moreover, $F_4 = \cdots = F_{t+3} = \emptyset$.
If $f \neq uv$, then there is a rainbow matching of size $2$ by Claim~\ref{claim: single OS-1} and Claim~\ref{claim: single OS-2}.
Thus $f = uv$, meaning that $F_1 = \{e, f\}$.
Then $F_2$ and $F_3$ do not contain $e$: otherwise, $\{e, f\}$ is a rainbow matching of size $2$.
This completes the proof.

\begin{remark}
A result in \cite{lee} implies a slightly weaker version of Theorem~\ref{cor:realmain}.
They gave a topological proof of $(3n-3+t, t+1, n) \to n$ for all $n \geq 1$ and $t > 0$ when at least $3n-2$ sets are nonempty.
\end{remark}

\section*{Acknowledgement}
We acknowledge the financial support from the Ministry of Educational and Science of the Russian Federation in the framework of MegaGrant no. 075-15-2019-1926 when the first author worked on Sections 1 through 3 of the paper.

The research of R.~Aharoni was supported in part by the United States--Israel Binational Science Foundation (BSF) grant no.\ 2006099, the Israel Science Foundation (ISF) grant no.\ 2023464 and the Discount Bank Chair at the Technion. This paper is part of a project that has received funding from the European Union's Horizon 2020 research and innovation programme under the Marie Sk\l{}dowska-Curie grant agreement no.\ 823748.

The research of J.~Briggs was supported by ISF grant no.\ 326/16, ISF grant no.\ 1162/15, and ISF grant no.\ 409/16
The research of J.~Kim was supported by BSF grant no.\ 2016077, ISF grant no.\ 1357/16.
The research of M.~Kim was supported by ISF grant no.\ 936/16.
This work was supported by the Institute for Basic Science (IBS-R029-C1).
This research was done while J.~Kim and M.~Kim were post-doctoral fellows at the Technion.

The authors thank the referees for their comments that helped us improve the readability of the paper.

\bibliographystyle{alpha}
\bibliography{3n-3references}

\end{document}